\documentclass[12pt]{amsart}

\usepackage[utf8]{inputenc}
\usepackage[T1]{fontenc}

\usepackage{amsmath,amssymb,amsfonts,textcomp,amsthm,xifthen,graphicx,color,pgfplots}
\usepackage{ifthen}

\usepackage{fullpage}

\usepackage[utf8]{inputenc}

\usepackage[pdftex,
            pdfauthor={Thomas F\"uhrer and Dirk Praetorius},
            pdftitle={A short note on plain convergence of ALSFEM},
            ]{hyperref}


\newcommand{\HDivset}[1]{\underbar\HH(\Div;#1)}
\newcommand\Grad{\boldsymbol\nabla}
\def\Div{{\mathbf{div}\,}}

\newcommand{\supp}{\ensuremath{\operatorname{supp}}}

\newcommand{\MMM}{\ensuremath{\boldsymbol{M}}}
\newcommand{\JJ}{\ensuremath{\mathcal{J}}}
\newcommand{\HHH}{\ensuremath{\mathcal{H}}}

\newtheorem{theorem}{Theorem}
\newtheorem{lemma}[theorem]{Lemma}

\newtheorem{proposition}[theorem]{Proposition}
\newtheorem{remark}[theorem]{Remark}
\newtheorem{algorithm}[theorem]{Algorithm}

\newcommand{\refine}{\operatorname{refine}}

\def\enorm#1{|\hspace*{-.5mm}|\hspace*{-.5mm}|#1|\hspace*{-.5mm}|\hspace*{-.5mm}|}


\DeclareMathOperator{\V}{\mathbb{V}}

\newcommand{\ip}[2]{(#1\hspace*{.5mm},#2)}
\newcommand{\dual}[2]{\langle#1\hspace*{.5mm},#2\rangle}

\newcommand{\norm}[3][]{#1\|#2#1\|_{#3}}

\newcommand{\diam}{\mathrm{diam}}
\def\div{{\rm div\,}}

\newcommand\ccurl{\boldsymbol{\operatorname{curl}}\,}

\newcommand{\Hdivset}[1]{\boldsymbol{H}(\div;#1)}

\newcommand{\set}[2]{\big\{#1\,:\,#2\big\}}

\newcommand{\ran}{\operatorname{ran}}

\newcommand{\R}{\ensuremath{\mathbb{R}}}
\newcommand{\N}{\ensuremath{\mathbb{N}}}
\newcommand{\HH}{\ensuremath{{\boldsymbol{H}}}}

\newcommand{\LL}{\ensuremath{\mathcal{L}}}
\newcommand{\DD}{\ensuremath{\mathcal{D}}}

\newcommand{\nn}{\ensuremath{\mathbf{n}}}
\newcommand{\vv}{\ensuremath{\boldsymbol{v}}}
\newcommand{\ww}{\ensuremath{\boldsymbol{w}}}
\newcommand{\TT}{\ensuremath{\mathcal{T}}}
\newcommand{\TTT}{\ensuremath{\mathbb{T}}}
\newcommand{\MM}{\ensuremath{\mathcal{M}}}
\newcommand{\FF}{\ensuremath{\boldsymbol{F}}}
\newcommand{\RR}{\ensuremath{\boldsymbol{R}}}

\newcommand{\cS}{\ensuremath{\mathcal{S}}}

\newcommand{\PP}{\ensuremath{\mathcal{P}}}

\newcommand{\NN}{\ensuremath{\mathcal{N}}}

\newcommand{\normal}{\ensuremath{{\boldsymbol{n}}}}

\newcommand{\VV}{\ensuremath{\mathbf{V}}}

\newcommand{\ff}{\ensuremath{\boldsymbol{f}}}

\newcommand{\ssigma}{{\boldsymbol\sigma}}
\newcommand{\ttau}{{\boldsymbol\tau}}
\newcommand{\uu}{\boldsymbol{u}}

\def\fine{\circ}
\def\coarse{\bullet}



\makeatletter
\def\@seccntformat#1{\vspace*{-2mm}\newline\hspace*{4mm}%
  \protect\textup{\protect\@secnumfont
    \ifnum\pdfstrcmp{subsection}{#1}=0 \bfseries\fi%
    \csname the#1\endcsname
    \protect\@secnumpunct
  }%
}
\def\paragraph{\@startsection{paragraph}{4}%
  \z@\z@{-\fontdimen2\font}%
  {\normalfont\bfseries}}
\makeatother


\makeatletter
\def\section{\@startsection{section}{1}%
\z@{.7\linespacing\@plus\linespacing}{.5\linespacing}%
{\normalsize\scshape\bfseries\centering}}
\makeatother

\makeatletter
\renewcommand{\@secnumfont}{\bfseries}
\makeatother

\begin{document}

\title{A short note on plain convergence of\\ adaptive Least-squares finite element methods}
\date{\today}
\author{Thomas F\"uhrer}
\author{Dirk Praetorius}
\address{Facultad de Matem\'{a}ticas, Pontificia Universidad Cat\'{o}lica de Chile, Santiago, Chile}
\email{tofuhrer@mat.uc.cl \quad {\rm(corresponding author)}}
\address{TU Wien, Institute of Analysis and Scientific Computing, Wiedner Hauptstr.\ 8--10, 1040 Wien, Austria}
\email{dirk.praetorius@asc.tuwien.ac.at}

\thanks{{\bf Acknowledgment.} 
This work was supported by CONICYT (through FONDECYT project 11170050) and the Austrian Science Fund FWF  (through project P33216 and the special research program SFB F65).}

\keywords{Least squares finite element methods, adaptive algorithm, convergence}
\subjclass[2010]{65N12, 
                 65N15, 
                 65N30, 
                 65N50
                 } 
\begin{abstract}
We show that adaptive least-squares finite element methods driven by the canonical least-squares functional converge under weak conditions on PDE operator, mesh-refinement, and marking strategy. 
Contrary to prior works, our plain convergence does neither rely on sufficiently fine initial meshes nor on severe restrictions on marking parameters. Finally, we prove that convergence is still valid if a contractive iterative solver is used to obtain the approximate solutions (e.g., the preconditioned conjugate gradient method with optimal preconditioner).
The results apply within a fairly abstract framework which covers a variety of model problems.
\end{abstract}
\maketitle

\def\DD{\mathbb{D}}
\def\VV{\mathbb{V}}

\section{Introduction}

Least-squares finite element methods (LSFEMs) are a class of finite element methods that minimize the residual in some
norm. These methods often become rather easy to analyze and implement when the residual is measured in the space of square integrable functions. Some features of LSFEMs are the following: First, the resulting algebraic systems are always symmetric and positive definite, thus allowing the use of standard iterative solvers. 
Second, $\inf$--$\sup$ stability for any conforming finite element space is inherited from the continuous problem.
Finally, another feature is the built-in (localizable) error estimator that can be used to steer an adaptive mesh-refinement in,
e.g., an adaptive algorithm of the form
\begin{align*}
  \boxed{\texttt{SOLVE}} \quad\longrightarrow\quad
  \boxed{\texttt{ESTIMATE}} \quad\longrightarrow\quad
  \boxed{\texttt{MARK}} \quad\longrightarrow\quad
  \boxed{\texttt{REFINE}}.
\end{align*}

For standard discretizations, the mathematical understanding of adaptive finite element methods (AFEMs) has matured over the past decades. We mention~\cite{msv2008,Siebert11} for abstract theories for plain convergence of AFEM, the seminal works~\cite{doerfler96,mns2000,bdd2004,stevenson2007,ckns2008} for convergence of standard AFEM with optimal algebraic rates, and the abstract framework of~\cite{axioms}, which also provides a comprehensive review of the state of the art.

In contrast to this,
only very little is known about convergence of adaptive LSFEM; see~\cite{CCcollective20,BCS18,CP15,CPB17}.
To the best of our knowledge, plain convergence of adaptive LSFEM when using the built-in error estimator has so far only been addressed in~\cite{CPB17}. The other works~\cite{CCcollective20,BCS18,CP15} deal with optimal convergence results, 
but rely on alternative error estimators. 

The aim of this work is to shed some new light on the plain convergence of adaptive LSFEMs using the canonical error estimator shipped with them. 
Basically, the whole idea of this note is to verify that a large class of LSFEMs fits into the abstract framework of~\cite{Siebert11}.
From this, we then conclude that the sequence of discrete solutions produced by the above iteration converges to the exact solution of
the underlying problem (see Theorem~\ref{thm:convergence} below). 
Let us mention that our result is weaker than~\cite[Theorem~4.1]{CPB17} as we only show (plain) convergence whereas
\cite{CPB17} proves that an equivalent error quantity is contractive.
However, the latter result comes at the price of assuming sufficiently fine initial meshes and sufficiently large marking
parameters $0<\theta<1$ in the D\"orfler marking criterion. The latter contradicts somehow the by now standard proofs of optimal
convergence where $\theta$ needs to be sufficiently small; see~\cite{axioms} for an overview.
However, we also note that~\cite{CPB17} is constrained by the D\"orfler marking criterion, while the present analysis, in the spirit of~\cite{Siebert11}, covers a fairly wide range of marking strategies.

The remainder of the work is organized as follows: In Section~\ref{sec:main}, we state our assumptions on the PDE setting (Section~\ref{section:pde}), the mesh-refinement (Section~\ref{sec:mesh}), the discrete spaces (Section~\ref{sec:discreteSpaces}), and the marking strategy (Section~\ref{sec:marking}). Moreover, we recall the least-squares discretization (Section~\ref{section:least-squares}) as well as the built-in error estimator (Section~\ref{sec:estimator}) and formulate the common adaptive algorithm (Algorithm~\ref{algorithm}). Our first main result proves plain convergence of adaptive LSFEM (Theorem~\ref{thm:convergence}), if the LSFEM solution is computed exactly. In practice, however, iterative solvers (e.g.,  multigrid or the preconditioned CG method) are used. Our second main result (Theorem~\ref{thm:convergence:iexact}) proves convergence of adaptive LSFEM in the presence of inexact solvers (Algorithm~\ref{algorithm:inexact}). Overall, the presented abstract setting covers several model problems like the Poisson problem (Section~\ref{sec:poisson}), general elliptic second-order PDEs (Section~\ref{section:example:general}), linear elasticity (Section~\ref{sec:lame}), the magnetostatic Maxwell problem (Section~\ref{sec:maxwell}), and the Stokes problem (Section~\ref{sec:stokes}). While this work focusses on plain convergence, the short appendix (Appendix~\ref{section:optimal}) notes that LSFEM ensures discrete reliability so that, in the spirit of~\cite{axioms}, the only missing link for the mathematical proof of optimal convergence rates is the verification of linear convergence.

\def\Capx{C_{\rm apx}}
\def\Cloc{C_{\rm loc}}

\def\Cmon{c_{\rm cnt}}
\def\Ccnt{C_{\rm cnt}}
\def\exact{\star}
\def\LS{{\rm LS}}
\def\UU{\mathcal{U}}

\def\qctr{q_{\rm ctr}}
\def\nn{\underline{n}}

\def\tr{\mathrm{tr}}

\section{Plain convergence of adaptive least-squares methods}\label{sec:main}

\subsection{Continuous model formulation}\label{section:pde}

We consider a PDE in the abstract form
\begin{align}\label{eq:modelAbstract}
  \LL \uu^\exact = \FF \quad \text{in a bounded domain } \Omega \subset \R^d \text{ with } d \ge 2.
\end{align}
Here, $\FF\in L^2(\Omega)^N$ with $N \ge 1$ are given data, and $\LL \colon \VV(\Omega) \to L^2(\Omega)^N$ is a linear operator from some Hilbert space $\VV(\Omega)$ with norm $\norm{\cdot}{\VV(\Omega)}$ to $L^2(\Omega)^N$ with norm $\norm\cdot{L^2(\Omega)}$. For simplicity, we assume that homogeneous boundary conditions are contained in the space $\VV(\Omega)$. To abbreviate notation, we write $\norm{\cdot}{\omega} := \norm{\cdot}{L^2(\omega)}$ for any measurable set $\omega \subseteq \Omega$. Moreover, $\ip\cdot\cdot_\omega$ denotes the corresponding $L^2(\omega)$ scalar product. 

We make the following assumptions on $\LL$ and $\FF$:
\begin{enumerate}
\renewcommand{\theenumi}{A\arabic{enumi}}
\bf
\item\label{ass:pde}
\rm
\textbf{$\boldsymbol\LL$ is continuously invertible:} With constants $\Cmon, \Ccnt >0$, it holds that
    \begin{align*}
      \Cmon^{-1}\norm{\vv}{\VV(\Omega)}
      \le \norm{\LL \vv}{\Omega} 
      \le \Ccnt \norm{\vv}{\VV(\Omega)} \quad\text{for all } \vv\in \VV(\Omega).
    \end{align*}
\bf
\item\label{ass:solvability}
\rm
\textbf{PDE admits solution:} The given data satisfy $\FF\in \ran(\LL)$.
\end{enumerate}
While \eqref{ass:solvability} yields the existence of the solution $\uu^\exact \in \VV(\Omega)$ of~\eqref{eq:modelAbstract}, assumption~\eqref{ass:pde} leads to
\begin{align}\label{eq:central}
 \Cmon^{-1} \, \norm{\uu^\exact - \vv}{\VV(\Omega)} 
 \le \norm{\FF - \LL \vv }{\Omega}
 \le \Ccnt \, \norm{\uu^\exact - \vv}{\VV(\Omega)}
 \quad \text{for all } \vv \in \VV(\Omega)
\end{align}
and hence, in particular, also guarantees uniqueness.

In practice, the norm on $\VV(\Omega)$ is often an integer order Sobolev space (see the examples in Section~\ref{sec:examples})
that satisfies the following additional properties (which coincide with~\cite[eq.~(2.3)]{Siebert11}):
\begin{enumerate}
\renewcommand{\theenumi}{A\arabic{enumi}}
\setcounter{enumi}{2}
\bf
\item\label{ass:additivity}
\rm
\textbf{Additivity:} 
The norm on $\VV(\Omega)$ is additive, i.e., for two disjoint subdomains $\omega_1, \omega_2 \subset \Omega$ with positive Lebesgue measure, it holds that
\begin{align*}
  \norm{\vv}{\VV(\omega_1\cup\omega_2)}^2 
  = \norm{\vv}{\VV(\omega_1)}^2 + \norm{\vv}{\VV(\omega_2)}^2
  \quad \text{for all } \vv \in \VV(\Omega).
\end{align*}
\bf
\item\label{ass:abscont}
\rm
\textbf{Absolute continuity:}
The norm on $\VV(\Omega)$ is absolutely continuous with respect to the Lebesgue measure, i.e., for all $\vv \in \VV(\Omega)$ and all domains $\omega \subset \Omega$, it holds that
\begin{align*}
  \norm{\vv}{\VV(\omega)} \to 0 \quad\text{as}\quad |\omega|\to 0.
\end{align*}
\end{enumerate}

\subsection{Least-squares method}\label{section:least-squares}

Let $\VV_\coarse(\Omega)$ be a closed subspace of $\VV(\Omega)$. The least-squares method seeks $\uu_\coarse^\exact \in \VV_\coarse(\Omega)$ as the minimizer of the least-squares functional, i.e.,
\begin{align}\label{eq:minization}
  \LS(\uu_\coarse^\exact;\FF) = \min_{\vv_\coarse\in \VV_\coarse(\Omega)} \LS(\vv_\coarse;\FF),
  \quad \text{where} \quad
  \LS(\vv;\FF) = \norm{\FF - \LL\vv}{\Omega}^2.
\end{align}
The Euler--Lagrange equations for this problem read:
Find $\uu_\coarse^\exact\in \VV_\coarse(\Omega)$ such that 
\begin{align}\label{eq:varform}
  b(\uu_\coarse^\exact, \vv_\coarse) = F(\vv_\coarse) \quad\text{for all } \vv_\coarse \in \VV_\coarse(\Omega),
\end{align}
where
\begin{align*}
  b(\ww,\vv) := \ip{\LL\ww}{\LL\vv}_\Omega
  \quad \text{and} \quad 
  F(\vv):= \ip{\FF}{\LL\vv}_\Omega
  \quad\text{for all } \vv, \ww \in \VV(\Omega).
\end{align*}
It is straightforward to see that $F(\cdot)$ and $b(\cdot,\cdot)$ satisfy the assumptions of the Lax--Milgram lemma (and, in particular,~\cite[eq.~(2.1)--(2.2)]{Siebert11}), i.e., for all $\vv, \ww \in \VV(\Omega)$, it holds that 
\begin{align*}
  |F(\vv)| 
  \le \Ccnt \, \norm{\FF}{\Omega} \norm{\vv}{\VV(\Omega)}, 
  \quad
  |b(\ww,\vv)|
  \le \Ccnt^2 \, \norm{\ww}{\VV(\Omega)}\norm{\vv}{\VV(\Omega)},
 \quad
  \Cmon^{-2} \, \norm{\ww}{\VV(\Omega)}^2  
  \le b(\ww,\ww).
\end{align*}
Therefore, the discrete variational formulation~\eqref{eq:varform} admits a unique solution and is equivalent to the minimization problem~\eqref{eq:minization}. In particular, there holds the C\'ea lemma 
\begin{align}\label{eq:cea}
\begin{split}
  \Cmon^{-1}\norm{\uu^\exact - \uu_\coarse^\exact}{\V(\Omega)}
  \stackrel{\eqref{ass:pde}}\le \norm{\LL(\uu^\exact-\uu_\coarse^\exact)}\Omega
  &\stackrel{\eqref{eq:minization}}{=} \min_{\vv_\coarse \in \V_\coarse(\Omega)} \norm{\LL(\uu^\exact - \vv_\coarse)}{\Omega}
  \\ &\stackrel{\eqref{ass:pde}}\le \Ccnt \,\min_{\vv_\coarse \in \V_\coarse(\Omega)} \norm{\uu^\exact - \vv_\coarse}{\V(\Omega)},
\end{split}
\end{align}%
i.e., the exact least square solutions are quasi-optimal.
Moreover, Galerkin orthogonality
\begin{align}\label{eq:galerkinorthogonality}
  b(\uu^\exact-\uu_\coarse^\exact,\vv_\coarse) = 0 \quad\text{for all }\vv_\coarse \in \VV_\coarse(\Omega)
\end{align}
follows from the variational formulation~\eqref{eq:varform}.

\subsection{Meshes and mesh-refinement}\label{sec:mesh}

For simplicity, as in~\cite{Siebert11}, we restrict our presentation to conforming simplicial triangulations $\TT_\coarse$ of $\Omega$ and refinement by, e.g., the newest vertex bisection algorithm~\cite{stevenson:NVB,KPP13}.
For each triangulation $\TT_\coarse$, let $h_\coarse \in L^\infty(\Omega)$ denote the associated mesh-size function given by $h_\coarse|_T = h_T = |T|^{1/d}$. 

Let $\refine(\cdot)$ denote the refinement routine. 
We write $\TT_\fine = \refine(\TT_\coarse,\MM_\coarse)$ if $\TT_\fine$ is generated from $\TT_\coarse$ by refining (at least) all marked elements $\MM_\coarse \subseteq \TT_\coarse$. Moreover, $\TTT(\TT_\coarse)$ denotes the set of all meshes that can be generated by an arbitrary but finite number of refinements of $\TT_\coarse$. Throughout, let $\TT_0$ be a given initial mesh and $\TTT := \TTT(\TT_0)$. 

We make the following assumptions (which essentially coincide with~\cite[eq.~(2.4)]{Siebert11}):
\begin{enumerate}
\renewcommand{\theenumi}{R\arabic{enumi}}
\bf
\item\label{ass:refinement:reduction}
\rm
\textbf{Reduction on refined elements:} The mesh-size function is monotone and contractive with constant $0<q_\mathrm{ref}<1$ on refined elements, i.e.,
for all $\TT_\coarse \in \TTT$ and $\TT_\fine \in \TTT(\TT_\coarse)$, it holds that
\begin{align*}
  h_\fine \le h_\coarse \quad \text{a.e.\ in } \Omega
  \quad \text{and} \quad
  h_\fine|_T \leq q_\mathrm{ref} h_\coarse|_T \quad\text{for all } T \in \TT_\fine \setminus \TT_\coarse.
\end{align*}
\bf
\item\label{ass:refinement:quasiuniform}
\rm
\textbf{Uniform shape regularity:} There exists a constant $\kappa>0$ depending only on the initial mesh $\TT_0$ such that
\begin{align*}
  \sup_{T\in\TT_\coarse} \frac{\diam(T)^d}{|T|} \leq \kappa \quad\text{for all }\TT_\coarse\in\TTT.
\end{align*}
\bf
\item\label{ass:refinement:marked}
\rm
\textbf{Marked elements are refined:} It holds that
\begin{align*}
  \MM_\coarse \cap \refine(\TT_\coarse, \MM_\coarse) = \emptyset
  \quad \text{for all } \TT_\coarse \in \TTT \text{ and all } \MM_\coarse \subseteq \TT_\coarse.
\end{align*}
\end{enumerate}

\def\AA{\mathcal A}
\subsection{Discrete spaces}\label{sec:discreteSpaces}

We assume that each mesh $\TT_\coarse \in \TTT$ is associated with some discrete space $\VV_\coarse(\Omega)$ satisfying the following properties (which coincide with~\cite[eq.~(2.5)]{Siebert11}):
\begin{enumerate}
\renewcommand{\theenumi}{S\arabic{enumi}}
\bf
\item\label{ass:conformity}
\rm
\textbf{Conformity:} The spaces are conforming and finite dimensional, i.e., 
\begin{align*}
 \VV_\coarse(\Omega) \subset \VV(\Omega) \text{ and }
 \dim(\VV_\coarse(\Omega))<\infty \quad\text{for all } \TT_\coarse\in \TTT.
\end{align*}
\bf
\item\label{ass:nested}
\rm
\textbf{Nestedness:} Mesh-refinement guarantees nested discrete spaces, i.e., 
\begin{align*}
 \VV_\coarse(\Omega)\subseteq \VV_\fine(\Omega) \quad\text{for all } \TT_\fine \in \TTT(\TT_\coarse).
\end{align*}
\end{enumerate}
Moreover, we assume that there exists a dense subspace $\DD(\Omega) \subset \VV(\Omega)$ with additive norm $\norm\cdot{\DD(\Omega)}$ (see~\eqref{ass:additivity} with $\VV(\cdot)$ being replaced by $\DD(\cdot)$) satisfying an approximation property:
\begin{enumerate}
\renewcommand{\theenumi}{S\arabic{enumi}}
\setcounter{enumi}{2}
\bf
\item\label{ass:localApproximation}
\rm
\textbf{Local approximation property:} 
There exist constants $\Capx > 0$ and $s > 0$ such that, 
for all $\TT_\coarse \in \TTT$, there exists an approximation operator $\AA_\coarse \colon \DD(\Omega)\to \VV_\coarse(\Omega)$ such that 
    \begin{align*}
      \norm{\vv-\AA_\coarse \vv}{\VV(T)} \le \Capx \, h_T^s \norm{\vv}{\DD(T)} \quad\text{for all } \vv \in \DD(\Omega) \text{ and all } T \in \TT_\coarse.
    \end{align*}
\end{enumerate}

\subsection{Natural least-squares error estimator}\label{sec:estimator}

Let $\TT_\coarse \in \TTT$. For all $\uu_\coarse\in \VV_\coarse(\Omega)$ and all $\UU_\coarse \subset \TT_\coarse$, we consider the contributions of the least-squares functional 
\begin{align}\label{eq:eta}
 \eta_\coarse(\UU_\coarse, u_\coarse) 
 := \bigg( \sum_{T \in \UU_\coarse} \eta_\coarse(T, u_\coarse)^2 \bigg)^{1/2},
 \quad \text{where} \quad
 \eta_\coarse(T, u_\coarse) := \norm{\FF - \LL\uu_\coarse}{T}.
\end{align}
To abbreviate notation, let $\eta_\coarse(u_\coarse) := \eta_\coarse(\TT_\coarse, u_\coarse) = \norm{\FF - \LL\uu_\coarse}{\Omega}$. From~\eqref{eq:central}, we see that
\begin{align}\label{eq:reliable-efficient}
  \Cmon^{-1} \, \norm{\uu^\exact - \uu_\coarse}{\VV(\Omega)} 
  \le \eta_\coarse(u_\coarse) = \LS(\uu_\coarse;\FF) = \norm{\LL(\uu^\exact - \uu_\coarse)}{\Omega}
  \le \Ccnt \, \norm{\uu^\exact - \uu_\coarse}{\VV(\Omega)},
\end{align}
i.e., the least-squares functional provides a natural (localizable) error estimator which is reliable (upper bound $\norm{\uu^\exact - \uu_\coarse}{\VV(\Omega)} \lesssim \eta_\coarse(u_\coarse)$) and efficient (lower bound $\eta_\coarse(u_\coarse) \lesssim \norm{\uu^\exact - \uu_\coarse}{\VV(\Omega)}$) with respect to the error $\norm{\uu^\exact-\uu_\coarse}{\VV(\Omega)}$ of \emph{any approximation} $\uu^\exact \approx \uu_\coarse \in \VV_\coarse(\Omega)$.
In contrast to usual error estimators for standard Galerkin FEM, no data approximation term appears explicitly in the definition of the estimator nor is needed to show the lower bound. 

\subsection{Marking strategy}\label{sec:marking}

We recall the following assumption from~\cite[Section~2.2.4]{Siebert11} on the marking strategy, where $\uu_\coarse \in \VV_\coarse(\Omega)$ is a fixed approximation $\uu_\coarse \approx \uu^\exact$:
\begin{enumerate}
\renewcommand{\theenumi}{M}
\bf
\item\label{ass:marking}
\rm
There exists a fixed function $g\colon \R_{\geq0}\to\R_{\geq0}$ being continuous at $0 = g(0)$ such that the set of
marked elements $\MM_\coarse \subseteq \TT_\coarse$ (corresponding to $\uu_\coarse$) satisfies that
\begin{align*}
  \max_{T \in\TT_\coarse \setminus \MM_\coarse} \eta_\coarse(T,\uu_\coarse)
  \le g \big( \max_{T\in\MM_\coarse} \eta_\coarse(T,\uu_\coarse) \big).
\end{align*}
\end{enumerate}

We note that the marking assumption~\eqref{ass:marking} is clearly satisfied with $g(s) = s$ if $\MM_\coarse$ contains at least one element with maximal error indicator, i.e., there exists $T \in \MM_\coarse$ such that $\eta_\coarse(T, u_\coarse) \ge \eta_\coarse(T', u_\coarse)$ for all $T' \in \TT_\coarse$.
We recall from~\cite[Section~4.1]{Siebert11} that the latter is always satisfied for the following common marking strategies (so that the adaptivity parameter $\theta$ could even vary between the steps of the adaptive algorithm):
\begin{itemize}
\item \textit{Maximum strategy:} Given $0<\theta\leq 1$, the marked elements are determined by
\begin{align*}
 \MM_\coarse = \set{T\in\TT_\coarse}{\eta_\coarse(T,\uu_\coarse)\geq \theta\, M_\coarse},
 \quad \text{where} \quad 
 M_\coarse := \max_{T\in\TT_\coarse} \eta_\coarse(T,\uu_\coarse).
\end{align*}
\item \textit{Equilibration strategy:} Given $0 < \theta \le 1$, the marked elements are determined by
\begin{align*}
 \MM_\coarse = \set{T\in\TT_\coarse}{\eta_\coarse(T,\uu_\coarse)^2\geq \theta\, \eta_\coarse(\uu_\coarse)^2/\#\TT_\coarse}.
\end{align*}
\item \textit{D\"orfler marking strategy:} Given $0<\theta\leq 1$, the set $\MM_\coarse \subseteq \TT_\coarse$ is chosen such that
\begin{align*}
 \theta \eta_\coarse(\uu_\coarse)^2 \leq \sum_{T\in\MM_\coarse} \eta_\coarse(T,\uu_\coarse)^2
 \quad \text{and} \quad
 \max_{T\in\TT_\coarse \setminus \MM_\coarse}\eta_\coarse(T,\uu_\coarse)
 \le \min_{T\in\MM_\coarse} \eta_\coarse(T,\uu_\coarse).
\end{align*}
\end{itemize}
We note that the second condition on the D\"orfler marking is not explicitly specified in~\cite{doerfler96}, but usually satisfied if the implementation is based on sorting~\cite{pp2020}.

\subsection{Convergent adaptive algorithm}

Our first theorem states convergence of the following basic algorithm in the sense that error as well as error estimator are driven to zero.

\begin{algorithm}\label{algorithm}
  {\bfseries Input:} Initial triangulation $\TT_0$.\\
  {\bfseries Loop:} For all $\ell = 0, 1, 2, \dots$, iterate the following steps~{\rm(i)--(iv)}:
\begin{itemize}
  \item[\rm(i)] \textbf{SOLVE.} Compute the exact least-squares solution $\uu_\ell^\exact \in \VV_\ell(\Omega)$ by solving~\eqref{eq:minization}--\eqref{eq:varform}.
  \item[\rm(ii)] \textbf{ESTIMATE.} For all $T \in \TT_\ell$, compute the contributions $\eta_\ell(T, \uu_\ell^\exact)$ from~\eqref{eq:eta}.
\item[\rm(iii)] \textbf{MARK.} Determine a set $\mathcal{M}_\ell \subseteq \mathcal{T}_\ell$ of marked elements satisfying~\eqref{ass:marking} for $\uu_\ell = \uu_\ell^\exact$.
\item[\rm(iv)] \textbf{REFINE.} Generate a new mesh $\mathcal{T}_{\ell+1} := {\rm refine}(\mathcal{T}_\ell,\mathcal{M}_\ell)$.
\end{itemize}
{\bfseries Output:} Sequences of approximations $\uu_\ell^\exact$ and corresponding error estimators $\eta_\ell(\uu_\ell^\exact)$.\qed
\end{algorithm}

The following theorem is our first main result.

\begin{theorem}\label{thm:convergence}
Suppose the assumptions~\eqref{ass:pde}--\eqref{ass:abscont},
\eqref{ass:refinement:reduction}--\eqref{ass:refinement:marked},
\eqref{ass:conformity}--\eqref{ass:localApproximation}, 
and~\eqref{ass:marking}.
In addition, we make the following assumption on the locality of the operator $\LL$:
\begin{enumerate}
\renewcommand{\theenumi}{L}
\bf
\item\label{ass:localOperator}
\rm
\textbf{Local boundedness:} \it There exists $\Cloc > 0$ such that, for all $\TT_\coarse \in \TTT$, it holds that
\begin{align*}
  \norm{\LL\vv}T \le \Cloc \norm{\vv}{\VV(\Omega_\coarse(T))} \quad\text{for all }\vv\in \VV(\Omega)
  \text{ and all } T \in \TT_\coarse,
\end{align*}
where $\Omega_\coarse(T) := \bigcup\set{T' \in \TT_\coarse}{T \cap T' \neq \emptyset} \subset \Omega$ denotes the patch of $T$.
\end{enumerate}
Then, Algorithm~\ref{algorithm} generates a sequence of approximations $(\uu_\ell^\exact)_{\ell\in\N_0}$ such that
\begin{align}
 \norm{\uu^\exact - \uu_\ell^\exact}{\VV(\Omega)} + \eta_\ell(\uu_\ell^\exact) \to 0
 \quad \text{as} \quad 
 \ell\to \infty.
\end{align}
\end{theorem}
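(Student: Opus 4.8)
The plan is to verify that the least-squares setting satisfies the hypotheses of the abstract convergence theorem of~\cite{Siebert11}, so that plain convergence follows from that reference. The key point is that~\eqref{ass:pde}--\eqref{ass:abscont} give the PDE/Hilbert-space structure, the bilinear form $b(\cdot,\cdot)$ and functional $F(\cdot)$ satisfy the Lax--Milgram conditions as already noted below~\eqref{eq:varform}, and the estimator $\eta_\ell(T,\uu_\ell^\exact) = \norm{\FF-\LL\uu_\ell^\exact}{T}$ plays the role of the local indicator. Thus the only genuinely new work is to check the two structural properties that~\cite{Siebert11} requires of the error indicator: a \emph{stability} (or continuity) estimate comparing indicators on a common refinement, and a \emph{reduction} estimate on refined elements. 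Both will be consequences of the locality assumption~\eqref{ass:localOperator} together with~\eqref{ass:refinement:reduction} and the approximation property~\eqref{ass:localApproximation}.

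First I would record the estimator \emph{stability}: for $\TT_\fine \in \TTT(\TT_\coarse)$, any $T \in \TT_\coarse \cap \TT_\fine$, and discrete functions $\uu_\coarse \in \VV_\coarse(\Omega)$, $\uu_\fine \in \VV_\fine(\Omega)$, the reverse triangle inequality gives
\begin{align*}
  |\eta_\fine(T,\uu_\fine) - \eta_\coarse(T,\uu_\coarse)|
  = \big| \norm{\FF-\LL\uu_\fine}{T} - \norm{\FF-\LL\uu_\coarse}{T} \big|
  \le \norm{\LL(\uu_\fine - \uu_\coarse)}{T}
  \le \Cloc \, \norm{\uu_\fine - \uu_\coarse}{\VV(\Omega_\coarse(T))},
\end{align*}
using~\eqref{ass:localOperator}. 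Summing squares over a subset $\UU \subseteq \TT_\coarse \cap \TT_\fine$ and invoking the finite overlap of the patches $\Omega_\coarse(T)$ (controlled by the shape regularity~\eqref{ass:refinement:quasiuniform}) together with the additivity~\eqref{ass:additivity} of the $\VV$-norm yields a constant $C_{\rm stab}$ with $|\eta_\fine(\UU,\uu_\fine) - \eta_\coarse(\UU,\uu_\coarse)| \le C_{\rm stab}\,\norm{\uu_\fine-\uu_\coarse}{\VV(\Omega)}$, which is exactly the stability axiom. Next I would record the \emph{reduction on refined elements}: for $T \in \TT_\fine \setminus \TT_\coarse$ and a single discrete function $\uu_\coarse$, the estimator on $T$ measured with the finer mesh-size obeys $\eta_\fine(T,\uu_\coarse) = \norm{\FF-\LL\uu_\coarse}{T}$ while $h_\fine|_T \le q_{\rm ref} h_\coarse|_T$; combined with the approximation property~\eqref{ass:localApproximation} on the dense subspace $\DD(\Omega)$ and density, this gives the required reduction factor $q_{\rm est} < 1$ (up to adding the stability term when the argument changes from $\uu_\coarse$ to $\uu_\fine$). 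These are the ``estimator reduction'' ingredients~\cite{Siebert11} needs.

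Having verified stability and reduction, I would combine them in the standard way: along the sequence produced by Algorithm~\ref{algorithm}, C\'ea's lemma~\eqref{eq:cea} and Galerkin orthogonality~\eqref{eq:galerkinorthogonality} imply that $(\uu_\ell^\exact)$ is a Cauchy sequence in the (non-increasing, hence convergent) energy, so $\uu_\ell^\exact \to \uu_\infty$ in $\VV(\Omega)$ for some limit $\uu_\infty$ lying in the closure $\VV_\infty(\Omega) := \overline{\bigcup_\ell \VV_\ell(\Omega)}$. Using the density subspace $\DD(\Omega)$, the local approximation property~\eqref{ass:localApproximation}, and the mesh-refinement properties, one argues — exactly as in~\cite{Siebert11} — that the elements that are \emph{never} refined have mesh-size bounded below, while on elements that are refined infinitely often the indicator tends to zero by the reduction estimate; the marking assumption~\eqref{ass:marking} with $g$ continuous at $0$ then forces the maximal indicator over the \emph{non-refined} part to zero as well, so $\eta_\ell(\uu_\ell^\exact) \to 0$. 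Finally, the efficiency bound in~\eqref{eq:reliable-efficient}, namely $\Cmon^{-1}\norm{\uu^\exact-\uu_\ell^\exact}{\VV(\Omega)} \le \eta_\ell(\uu_\ell^\exact)$, upgrades $\eta_\ell(\uu_\ell^\exact) \to 0$ to $\norm{\uu^\exact-\uu_\ell^\exact}{\VV(\Omega)} \to 0$, which gives the claim (and in particular identifies $\uu_\infty = \uu^\exact$).

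The main obstacle I anticipate is not any single estimate but the bookkeeping in the convergence argument of~\cite{Siebert11}: carefully partitioning $\TT_\ell$ into elements refined finitely versus infinitely often, showing that the union of limit-space functions is dense enough that $\uu_\infty$ is a genuine Galerkin limit, and then chaining the reduction estimate, the absolute continuity~\eqref{ass:abscont}, and the marking hypothesis to conclude $\max_{T \in \TT_\ell \setminus \MM_\ell}\eta_\ell(T,\uu_\ell^\exact) \to 0$ and hence $\eta_\ell(\uu_\ell^\exact) \to 0$. Since this is precisely the content of~\cite{Siebert11}, the cleanest route is to state the stability and reduction lemmas above, observe that all of~\cite[eq.~(2.1)--(2.5)]{Siebert11} hold, and invoke that theorem directly rather than reproving it. One subtlety worth flagging explicitly is that~\cite{Siebert11}'s estimator reduction is usually phrased for the indicator evaluated at a \emph{fixed} discrete argument, so the transition from $\uu_\coarse$ to $\uu_\fine$ (needed because Algorithm~\ref{algorithm} recomputes the solution each step) must be absorbed via the stability estimate together with the energy convergence of $(\uu_\ell^\exact)$ — this interplay is the one place where the abstract template does not apply verbatim and a short additional argument is required.
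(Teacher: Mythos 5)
Your plan to reduce everything to \cite[Theorem~2.1]{Siebert11} is the right idea, and your stability computation (reverse triangle inequality plus \eqref{ass:localOperator}) is correct, but the second structural property you introduce does not exist: there is \emph{no} estimator reduction on refined elements for the built-in least-squares estimator, and your route to $\eta_\ell(\uu_\ell^\exact)\to0$ relies on it. The indicator $\eta_\coarse(T,\uu_\coarse)=\norm{\FF-\LL\uu_\coarse}{T}$ from \eqref{eq:eta} carries no mesh-size weight, so at a fixed argument refinement changes nothing: if $T\in\TT_\coarse$ is subdivided into children $T'\in\TT_\fine$, then
\begin{align*}
 \sum_{T'\in\TT_\fine,\;T'\subseteq T}\eta_\fine(T',\uu_\coarse)^2
 \;=\;\norm{\FF-\LL\uu_\coarse}{T}^2
 \;=\;\eta_\coarse(T,\uu_\coarse)^2,
\end{align*}
so no factor $q_{\rm est}<1$ can appear. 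Assumption \eqref{ass:refinement:reduction} only contracts $h_T$, which does not enter the indicator, and \eqref{ass:localApproximation} cannot manufacture a contraction either, since it concerns approximation of functions in $\DD(\Omega)$ and says nothing about the size of the residual $\FF-\LL\uu_\coarse$ on a refined element. This missing reduction is precisely the known obstruction for adaptive LSFEM with the canonical estimator (it is why \cite{CPB17} needs fine initial meshes and large marking parameters), and it is the reason to work in the framework of \cite{Siebert11}, which deliberately avoids any reduction or contraction axiom; the stability/reduction pair belongs to the optimal-rate theory of \cite{axioms}, which is not what is being invoked. Consequently your concluding chain (reduction on refined elements, then marking transfers smallness to the rest) does not go through as stated; note also that an element leaves the mesh once it is refined, so ``refined infinitely often'' is not meaningful elementwise.

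What \cite[Theorem~2.1]{Siebert11} actually requires beyond the assumptions already listed in the theorem are the two bounds of \cite[eq.~(2.10)]{Siebert11}, and these --- not stability and reduction --- are the only things left to verify. Writing $\boldsymbol{R}(\ww)\in\VV(\Omega)'$ for the residual $\dual{\boldsymbol{R}(\ww)}{\vv}=F(\vv)-b(\ww,\vv)$, elementwise Cauchy--Schwarz and \eqref{ass:localOperator} give
\begin{align*}
 \dual{\boldsymbol{R}(\uu_\coarse^\exact)}{\vv}
 =\sum_{T\in\TT_\coarse}\ip{\FF-\LL\uu_\coarse^\exact}{\LL\vv}_T
 \le \Cloc\sum_{T\in\TT_\coarse}\eta_\coarse(T,\uu_\coarse^\exact)\,\norm{\vv}{\VV(\Omega_\coarse(T))},
\end{align*}
and the triangle inequality with \eqref{ass:localOperator} gives $\eta_\coarse(T,\uu_\coarse^\exact)\le\norm{\FF}{T}+\Cloc\,\norm{\uu_\coarse^\exact}{\VV(\Omega_\coarse(T))}$. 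With these two estimates, Siebert's theorem applies verbatim, and its internal mechanism replaces your reduction step: the maximal indicator over \emph{marked} elements tends to zero because marked elements are refined \eqref{ass:refinement:marked} and the norms are absolutely continuous \eqref{ass:abscont}; the marking assumption \eqref{ass:marking} then controls the unmarked elements; and Galerkin orthogonality \eqref{eq:galerkinorthogonality} together with \eqref{ass:localApproximation} yields weak vanishing of the residual, whence the a priori limit of $(\uu_\ell^\exact)$ coincides with $\uu^\exact$. Finally, $\eta_\ell(\uu_\ell^\exact)\to0$ follows from the equivalence \eqref{eq:reliable-efficient}; your last step is fine in substance, though the bound $\Cmon^{-1}\norm{\uu^\exact-\uu_\ell^\exact}{\VV(\Omega)}\le\eta_\ell(\uu_\ell^\exact)$ you use there is reliability, not efficiency.
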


\def\residual{\boldsymbol{R}}
\begin{proof}
We only need to check that the assumptions of~\cite[Theorem~2.1]{Siebert11} are satisfied. Our assumptions on marking strategy, refinement, and discrete spaces are the same as in~\cite[Section~2]{Siebert11}. The bilinear form of the least-squares FEM is coercive on $\VV(\Omega)$ and thus satisfies the uniform $\inf$--$\sup$ conditions from~\cite[eq.~(2.6)]{Siebert11}. It thus only remains to verify~\cite[eq.~(2.10)]{Siebert11}: Given an element $\ww\in \VV(\Omega)$, define the residual $\residual(\ww) \in \VV(\Omega)'$ by
\begin{align}\label{eq:def:residual}
 \dual{\residual(\ww)}{\vv} := F(\vv) - b(\ww,\vv) = b(\uu^\exact - \ww,\vv) 
 \quad\text{for all } \vv\in \VV(\Omega).
\end{align}
Let $\TT_\coarse \in \TTT$ and $\vv \in \VV(\Omega)$.
By definition and~\eqref{ass:localOperator}, we have that
\begin{align*}
 \dual{\residual(\uu_\coarse^\exact)}\vv 
 = \ip{\FF-\LL\uu_\coarse^\exact}{\LL\vv}_\Omega
 = \sum_{T\in\TT} \ip{\FF-\LL\uu_\coarse^\exact}{\LL\vv}_T
 \le \Cloc \sum_{T\in\TT} \eta_\coarse(T, \uu_\coarse^\exact) \, \norm{\vv}{\VV(\Omega_\coarse(T))},
\end{align*}
which is~\cite[eq.~(2.10a)]{Siebert11}. Moreover, it holds that
\begin{align*}
 \eta_\coarse(T, \uu_\coarse^\exact) \le \norm{\FF}{T} + \Cloc \, \norm{\uu_\coarse^\exact}{\VV(\Omega_\coarse(T))}
 \quad \text{for all } T \in \TT_\coarse.
\end{align*}
which is~\cite[eq.~(2.10b)]{Siebert11} and hence concludes the verification of~\cite[eq.~(2.10)]{Siebert11}. 
Altogether,~\cite[Theorem~2.1]{Siebert11} applies and proves that $\norm{\uu^\exact - \uu_\ell^\exact}{\VV(\Omega)} \to 0$ as $\ell \to \infty$. Since $\eta_\ell(\uu_\ell^\exact) \simeq \norm{\uu^\exact - \uu_\ell^\exact}{\VV(\Omega)}$, we also have that $\eta_\ell(\uu_\ell^\exact) \to 0$ as $\ell\to \infty$.
\end{proof}

\begin{remark}\label{rem:lowerBound}
Note that the locality assumption~\eqref{ass:localOperator} also directly implies local efficiency
\begin{align*}
 \eta_\coarse(T,u_\coarse) 
 = \norm{\LL(\uu^\exact - \uu_\coarse)}T 
 \le \Cloc \, \norm{\uu^\exact-\uu_\coarse}{\VV(\Omega_\coarse(T))}
 \text{ for all } T \in \TT_\coarse 
 \text{ and all } \uu_\coarse \in \VV_\coarse(\Omega).
\end{align*}
Again, we stress that this local lower bound does not include data approximation terms.
\end{remark}

\subsection{Convergence in the case of inexact solvers}\label{sec:inexact}

For given $\TT_\coarse \in \TTT$, the exact computation of the least-squares solution $\uu_\bullet^\exact \in \VV_\bullet(\Omega)$ corresponds to the solution of a symmetric and positive definite algebraic system; see~\eqref{eq:minization}--\eqref{eq:varform}. Let us assume that we have a contractive iterative solver at hand:
\begin{enumerate}
\renewcommand{\theenumi}{C}
\bf
\item\label{ass:contraction}
\rm\textbf{Contractive iterative solver:}
There exists $0 < \qctr < 1$ as well as an equivalent norm $\enorm\cdot$ on $\VV(\Omega)$ such that, for all $\TT_\coarse \in \TTT$, there exists $\Phi_\coarse \colon \VV_\bullet(\Omega) \to \VV_\bullet(\Omega)$ with
\begin{align*}
 \enorm{\uu_\coarse^\exact - \Phi_\coarse(\vv_\coarse)}
 \le \qctr \, \enorm{\uu_\coarse^\exact - \vv_\coarse}
 \quad \text{for all } \vv_\coarse \in \VV_\coarse(\Omega).
\end{align*}
\end{enumerate}
Under this additional assumption, the following adaptive strategy steers adaptive mesh-refinement as well as the iterative solver, where we employ nested iteration in Algorithm~\ref{algorithm:inexact}(iv) to lower the number of solver steps.

\begin{algorithm}\label{algorithm:inexact}
  {\bfseries Input:} Initial triangulation $\TT_0$, initial guess $\uu_{0,0} \in \VV_0(\Omega)$\\
  {\bfseries Loop:} For all $\ell = 0, 1, 2, \dots$, iterate the following steps~{\rm(i)--(iv)}:
\begin{itemize}
  \item[\rm(i)] \textbf{INEXACT SOLVE.} Starting from $\uu_{\ell,0}$, do at least one step of the iterative solver 
  \begin{align*}
   \uu_{\ell,n} := \Phi_\ell(\uu_{\ell,n-1}) \quad \text{for all } n = 1, \dots, \nn = \nn(\ell) \ge 1.
  \end{align*} 
  \item[\rm(ii)] \textbf{ESTIMATE.} For all $T \in \TT_\ell$, compute the contributions $\eta_\ell(T, \uu_{\ell, \nn})$ from~\eqref{eq:eta}.
\item[\rm(iii)] \textbf{MARK.} Determine a set $\mathcal{M}_\ell \subseteq \mathcal{T}_\ell$ of marked elements satisfying~\eqref{ass:marking} for $\uu_\ell = \uu_{\ell, \nn}$.
\item[\rm(iv)] \textbf{REFINE.} Generate a new mesh $\mathcal{T}_{\ell+1} := {\rm refine}(\mathcal{T}_\ell,\mathcal{M}_\ell)$ and define $\uu_{\ell+1,0} := \uu_{\ell,\nn}$.
\end{itemize}
{\bfseries Output:} Sequences of approximations $\uu_{\ell, \nn}$ and corresponding error estimators $\eta_\ell(\uu_{\ell, \nn})$.\qed
\end{algorithm}

\begin{remark}
For the plain convergence result of the following theorem, one solver step (i.e., $\nn(\ell) = 1$ for all $\ell \in \N_0$) is indeed sufficient. In practice, the steps \textbf{INEXACT SOLVE} and \textbf{ESTIMATE} in Algorithm~\ref{algorithm:inexact}{\rm(i)--(ii)} are usually combined. 
With some parameter $\lambda > 0$, a natural stopping criterion for the iterative solver reads
\begin{align*}
 \enorm{\uu_{\ell,\nn} - \uu_{\ell,\nn-1}} \le \lambda \, \eta_\ell(\uu_{\ell-1,\nn})
 \quad \text{resp.} \quad
 \enorm{\uu_{\ell,\nn} - \uu_{\ell,\nn-1}} \le \lambda \, \eta_\ell(\uu_{\ell,\nn}).
\end{align*}
For adaptive FEM based on locally weighted estimators, we refer, e.g., to~\cite{agl2013,fembem:uzawa} for linear convergence of such a strategy (based on the first criterion) and to~\cite{ABEMsolve,banach} for linear convergence with optimal rates (based on the second criterion for sufficiently small $0 <  \lambda \ll 1$). Moreover, we note that the second criterion even allows to prove convergence with optimal rates with respect to the computational costs~\cite{banach2}.
\end{remark}

Our second theorem states convergence of Algorithm~\ref{algorithm:inexact} in the sense that, also in the case of inexact solution of the least-squares systems, the error as well as the error estimator are driven to zero.

\begin{theorem}\label{thm:convergence:iexact}
Suppose the assumptions~\eqref{ass:pde}--\eqref{ass:abscont},
\eqref{ass:refinement:reduction}--\eqref{ass:refinement:marked},
\eqref{ass:conformity}--\eqref{ass:localApproximation}, \eqref{ass:marking},
\eqref{ass:localOperator}, and~\eqref{ass:contraction}.
Then, Algorithm~\ref{algorithm:inexact} generates a sequence of approximations $(\uu_{\ell,\nn})_{\ell\in\N_0}$ such that
\begin{align}
 \norm{\uu^\exact - \uu_{\ell,\nn}}{\VV(\Omega)} + \eta_\ell(\uu_{\ell,\nn}) \to 0
 \quad \text{as} \quad 
 \ell\to \infty.
\end{align}
\end{theorem}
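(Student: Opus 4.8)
The plan is to prove the equivalent statement $\eta_\ell(\uu_{\ell,\nn}) \to 0$; indeed, the reliability bound in~\eqref{eq:reliable-efficient}, applied with $\uu_\coarse = \uu_{\ell,\nn}$, then immediately yields $\norm{\uu^\exact - \uu_{\ell,\nn}}{\VV(\Omega)} \to 0$ as well. I would organize the argument into three steps: (i) a priori convergence of the \emph{exact} LSFEM solutions on the mesh hierarchy generated by Algorithm~\ref{algorithm:inexact}; (ii) decay of the solver error, using~\eqref{ass:contraction} and the nested iteration in Algorithm~\ref{algorithm:inexact}(iv); and (iii) identification of the limit by re-running the argument behind~\cite[Theorem~2.1]{Siebert11} for the perturbed iterates.

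For step~(i), set $\VV_\infty(\Omega) := \overline{\bigcup_{\ell\in\N_0}\VV_\ell(\Omega)}$, which is a closed subspace of $\VV(\Omega)$ by~\eqref{ass:conformity}. Nestedness~\eqref{ass:nested} and Galerkin orthogonality~\eqref{eq:galerkinorthogonality} give the Pythagoras identity $\norm{\LL(\uu^\exact-\uu_\ell^\exact)}{\Omega}^2 = \norm{\LL(\uu^\exact-\uu_{\ell+1}^\exact)}{\Omega}^2 + \norm{\LL(\uu_{\ell+1}^\exact-\uu_\ell^\exact)}{\Omega}^2$ (note $\uu_{\ell+1}^\exact - \uu_\ell^\exact \in \VV_{\ell+1}(\Omega)$), so $\ell\mapsto\norm{\LL(\uu^\exact-\uu_\ell^\exact)}{\Omega}$ decreases and, by telescoping, $(\uu_\ell^\exact)_{\ell\in\N_0}$ is Cauchy for the norm $\vv\mapsto\norm{\LL\vv}{\Omega}\simeq\norm{\vv}{\VV(\Omega)}$ (see~\eqref{ass:pde}); denote its limit by $\uu_\infty^\exact\in\VV_\infty(\Omega)$. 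Passing to the limit in~\eqref{eq:varform} along the nested spaces (and using $b(\uu^\exact,\vv)=F(\vv)$ for all $\vv\in\VV(\Omega)$) shows $b(\uu^\exact-\uu_\infty^\exact,\vv)=0$ for all $\vv\in\VV_\infty(\Omega)$, i.e., $\uu_\infty^\exact$ is the LSFEM solution in $\VV_\infty(\Omega)$. For step~(ii), put $e_\ell := \enorm{\uu_\ell^\exact-\uu_{\ell,\nn}}$. Since $\nn\ge1$ solver steps are performed and $\uu_{\ell,0}=\uu_{\ell-1,\nn}$, assumption~\eqref{ass:contraction} and the triangle inequality give $e_\ell \le \qctr\,\enorm{\uu_\ell^\exact-\uu_{\ell-1,\nn}} \le \qctr\big(\enorm{\uu_\ell^\exact-\uu_{\ell-1}^\exact}+e_{\ell-1}\big)$ for $\ell\ge1$. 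Step~(i) and norm equivalence give $\enorm{\uu_\ell^\exact-\uu_{\ell-1}^\exact}\to0$; since $0<\qctr<1$, iterating the recursion shows $(e_\ell)$ is bounded and $\limsup_\ell e_\ell \le \qctr\limsup_\ell e_\ell$, hence $e_\ell\to0$. Together with step~(i) this yields $\uu_{\ell,\nn}\to\uu_\infty^\exact$ in $\VV(\Omega)$, whence $\eta_\ell(\uu_{\ell,\nn})=\norm{\FF-\LL\uu_{\ell,\nn}}{\Omega}\to\norm{\FF-\LL\uu_\infty^\exact}{\Omega}$ by~\eqref{ass:pde}. It thus remains to prove $\uu_\infty^\exact=\uu^\exact$.

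For step~(iii), I would re-run the proof of~\cite[Theorem~2.1]{Siebert11} with $\uu_{\ell,\nn}$ in the role of the discrete solution. All structural ingredients are available: reliability and efficiency~\eqref{eq:reliable-efficient}, locality of $\LL$~\eqref{ass:localOperator} (which localizes both the residual~\eqref{eq:def:residual} and the indicators $\eta_\ell(T,\uu_{\ell,\nn})$, exactly as in the proof of Theorem~\ref{thm:convergence}), the marking condition~\eqref{ass:marking} for $\uu_\ell=\uu_{\ell,\nn}$, the mesh properties~\eqref{ass:refinement:reduction}--\eqref{ass:refinement:marked}, the approximation property~\eqref{ass:localApproximation}, and absolute continuity~\eqref{ass:abscont}. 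The only essential use of the Galerkin property in~\cite{Siebert11} is to identify the limit of the discrete solutions as the Galerkin solution on $\VV_\infty(\Omega)$, which steps~(i)--(ii) supply directly, since $\uu_{\ell,\nn}\to\uu_\infty^\exact$ strongly and $\uu_\infty^\exact$ is the LSFEM solution in $\VV_\infty(\Omega)$. One then splits $\TT_\ell$ into the elements that are eventually never refined and those refined infinitely often: on the latter, the mesh-size tends to zero a.e. by~\eqref{ass:refinement:reduction}--\eqref{ass:refinement:marked}, so~\eqref{ass:localApproximation} and~\eqref{ass:abscont} drive the corresponding estimator contributions to zero; on the former, the marking condition~\eqref{ass:marking} forces the maximal remaining indicator to vanish in the limit. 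Combined with reliability, this yields $\norm{\uu^\exact-\uu_\infty^\exact}{\VV(\Omega)}=0$, hence $\uu_\infty^\exact=\uu^\exact$, and the asserted convergence follows.

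The main obstacle is step~(iii): one must verify that replacing the exact Galerkin solutions $\uu_\ell^\exact$ by the inexact iterates $\uu_{\ell,\nn}$ does not disturb the mesh-partition-and-marking argument of~\cite{Siebert11}. This should be harmless, precisely because the estimator is locally Lipschitz in its discrete argument (with constant $\Cloc$, by~\eqref{ass:localOperator}) and steps~(i)--(ii) already furnish strong convergence $\uu_{\ell,\nn}\to\uu_\infty^\exact$ to a genuine Galerkin limit, so every quantity entering the convergence analysis passes to the limit exactly as in the exact case. An essentially equivalent alternative would be to first record that~\cite[Theorem~2.1]{Siebert11} remains valid when the marking in step~(iii) of its algorithm is performed with respect to \emph{any} sequence $\tilde\uu_\ell\in\VV_\ell(\Omega)$ with $\norm{\tilde\uu_\ell-\uu_\ell^\exact}{\VV(\Omega)}\to0$, and then to apply this with $\tilde\uu_\ell=\uu_{\ell,\nn}$, the required decay of the solver error being exactly step~(ii).
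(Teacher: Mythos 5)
Your proposal is correct and follows essentially the same route as the paper: the paper's Lemma~\ref{lem:aprioriconvergence} is your step~(i) (there cited from~\cite[Lemma~3.1]{Siebert11} rather than re-proved via the Pythagoras argument), its Lemma~\ref{lem:convergence} is exactly your step~(ii), and its proof of Theorem~\ref{thm:convergence:iexact} carries out your step~(iii) by verifying the building blocks of~\cite[Theorem~2.1]{Siebert11} (uniform boundedness, vanishing indicators on marked elements, weak convergence of the residual via Galerkin orthogonality for $\uu_\ell^\exact$ plus the solver-error decay) for the perturbed iterates $\uu_{\ell,\nn}$, which is precisely the stability-under-perturbation statement you formulate at the end.
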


The proof of Theorem~\ref{thm:convergence:iexact} requires some preparations. First, we recall~\cite[Lemma~3.1]{Siebert11}, which already dates back to the seminal work~\cite{bv1984}.

\begin{lemma}\label{lem:aprioriconvergence}
Suppose assumptions~\eqref{ass:pde}--\eqref{ass:solvability} and~\eqref{ass:conformity}--\eqref{ass:nested}.
Let $(\uu_\ell^\exact)_{\ell\in\N_0}$ denote the sequence of exact least-squares solutions obtained by solving~\eqref{eq:minization}--\eqref{eq:varform} for the meshes generated by Algorithm~\ref{algorithm:inexact}.
Then,
\begin{align}
 \norm{\uu_\infty^\exact - \uu_\ell^\exact}{\VV(\Omega)}
 \to 0 \quad \text{as } \ell \to \infty,
\end{align}
where $\uu_\infty^\exact \in \VV_\infty(\Omega) := \overline{\bigcup_{\ell\in\N_0} \VV_\ell(\Omega)} \subseteq \VV(\Omega)$ solves (and is, in fact, the unique solution of)
\begin{align*}
 b(\uu_\infty^\exact, \vv_\infty) = F(\vv_\infty) 
 \quad \text{for all } \vv_\infty \in \VV_\infty(\Omega).
 \qquad \qed
\end{align*}
\end{lemma}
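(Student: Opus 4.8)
The plan is to establish this as a standard a priori convergence result for a sequence of Galerkin solutions on nested spaces, following the classical argument of Babuška--Vitillaro~\cite{bv1984}. The key observation is that the exact least-squares solutions $\uu_\ell^\exact \in \VV_\ell(\Omega)$ are precisely the $b(\cdot,\cdot)$-orthogonal projections of $\uu^\exact$ onto the nested, increasing subspaces $\VV_0(\Omega) \subseteq \VV_1(\Omega) \subseteq \dots \subseteq \VV(\Omega)$, by~\eqref{eq:varform} and~\eqref{eq:galerkinorthogonality}. Since $b(\cdot,\cdot)$ is bounded and coercive on $\VV(\Omega)$ (the constants recorded right after~\eqref{eq:varform}), the induced energy norm $b(\cdot,\cdot)^{1/2}$ is equivalent to $\norm{\cdot}{\VV(\Omega)}$, so it suffices to prove convergence in the energy norm, where the projection structure can be exploited directly.

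First I would introduce $\uu_\infty^\exact \in \VV_\infty(\Omega)$ as the $b(\cdot,\cdot)$-orthogonal projection of $\uu^\exact$ onto the closed subspace $\VV_\infty(\Omega) = \overline{\bigcup_\ell \VV_\ell(\Omega)}$; existence and uniqueness follow from Lax--Milgram / Riesz applied on the Hilbert space $(\VV_\infty(\Omega), b(\cdot,\cdot))$, and by construction it solves the stated limiting variational problem $b(\uu_\infty^\exact, \vv_\infty) = F(\vv_\infty)$ for all $\vv_\infty \in \VV_\infty(\Omega)$. Note that since each $\uu_\ell^\exact \in \VV_\ell(\Omega) \subseteq \VV_\infty(\Omega)$, the nestedness~\eqref{ass:nested} and Galerkin orthogonality give that $\uu_\ell^\exact$ is equally the $b$-orthogonal projection of $\uu_\infty^\exact$ onto $\VV_\ell(\Omega)$, i.e.\ $b(\uu_\infty^\exact - \uu_\ell^\exact, \vv_\ell) = 0$ for all $\vv_\ell \in \VV_\ell(\Omega)$. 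Hence by the Pythagoras identity in the energy norm, $b(\uu_\infty^\exact - \uu_\ell^\exact, \uu_\infty^\exact - \uu_\ell^\exact) = b(\uu_\infty^\exact, \uu_\infty^\exact) - b(\uu_\ell^\exact, \uu_\ell^\exact)$, so the energy-norm errors are monotonically decreasing in $\ell$.

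It then remains to show this decreasing sequence tends to zero. This is the heart of the argument: given $\varepsilon > 0$, density of $\bigcup_\ell \VV_\ell(\Omega)$ in $\VV_\infty(\Omega)$ yields some $L$ and $\vv_L \in \VV_L(\Omega)$ with $\norm{\uu_\infty^\exact - \vv_L}{\VV(\Omega)} < \varepsilon$; by quasi-optimality of the Galerkin projection (the C\'ea estimate~\eqref{eq:cea}, or directly the projection property in the energy norm, which actually gives best approximation), $\norm{\uu_\infty^\exact - \uu_L^\exact}{\VV(\Omega)} \le \Cmon \Ccnt \, \varepsilon$, and then monotonicity of the errors propagates this bound to all $\ell \ge L$. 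Finally I would translate back from the energy norm to $\norm{\cdot}{\VV(\Omega)}$ using norm equivalence, concluding $\norm{\uu_\infty^\exact - \uu_\ell^\exact}{\VV(\Omega)} \to 0$. The only mild subtlety — not really an obstacle — is checking that the limiting problem is well posed on $\VV_\infty(\Omega)$; but this is immediate since coercivity and boundedness of $b(\cdot,\cdot)$ are inherited by any closed subspace, so Lax--Milgram applies verbatim. Since the statement is explicitly attributed to~\cite[Lemma~3.1]{Siebert11} and~\cite{bv1984}, I would keep the write-up brief and, if desired, simply cite those sources for the detailed verification.
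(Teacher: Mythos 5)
Your proof is correct and is essentially the standard argument that the paper itself invokes by citing \cite[Lemma~3.1]{Siebert11} (going back to \cite{bv1984}): the symmetric bilinear form $b(\cdot,\cdot)$ makes the $\uu_\ell^\exact$ energy-norm best approximations on nested spaces, and Lax--Milgram on the closed limit space $\VV_\infty(\Omega)$ plus density of $\bigcup_\ell \VV_\ell(\Omega)$ yields the convergence, exactly as in the cited reference. Only a cosmetic slip: the reference \cite{bv1984} is Babu\v{s}ka--Vogelius, not ``Babu\v{s}ka--Vitillaro''.
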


The next lemma shows that the inexact least-squares solutions converge indeed towards the same limit as the exact least-squares solutions.

\begin{lemma}\label{lem:convergence}
Besides the assumptions from Lemma~\ref{lem:aprioriconvergence} suppose assumption~\eqref{ass:contraction}. 
Let $(\uu_{\ell,\nn})_{\ell\in\N_0}$ denote the (final) approximations computed in Algorithm~\ref{algorithm:inexact}.
Then,
\begin{align}
 \norm{\uu_\infty^\exact - \uu_{\ell,\nn}}{\VV(\Omega)}
 \to 0 \quad \text{as } \ell \to \infty,
\end{align}
where $\uu_\infty^\exact \in \VV(\Omega)$ is the limit from Lemma~\ref{lem:aprioriconvergence}.
\end{lemma}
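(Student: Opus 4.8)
The plan is to combine the a-priori convergence of the \emph{exact} least-squares solutions from Lemma~\ref{lem:aprioriconvergence} with the contraction property \eqref{ass:contraction} of the iterative solver, exploiting the nested-iteration initialization $\uu_{\ell+1,0} := \uu_{\ell,\nn}$. Since \eqref{ass:contraction} is formulated in the equivalent norm $\enorm\cdot$, and since norm equivalence on $\VV(\Omega)$ transfers convergence back and forth, it suffices to show $\enorm{\uu_\infty^\exact - \uu_{\ell,\nn}} \to 0$ as $\ell\to\infty$. Throughout, I would write $e_\ell := \enorm{\uu_\ell^\exact - \uu_{\ell,\nn}}$ for the solver error on level $\ell$ (measuring the final iterate against the \emph{exact} discrete solution on the same mesh) and note that, because $\uu_\infty^\exact$ is the common limit from Lemma~\ref{lem:aprioriconvergence}, a triangle inequality $\enorm{\uu_\infty^\exact - \uu_{\ell,\nn}} \le \enorm{\uu_\infty^\exact - \uu_\ell^\exact} + e_\ell$ reduces everything to proving $e_\ell \to 0$.

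The key step is a recursion for $e_\ell$. Doing at least one solver step from $\uu_{\ell,0}$ and iterating \eqref{ass:contraction} $\nn(\ell)\ge 1$ times gives $e_\ell = \enorm{\uu_\ell^\exact - \uu_{\ell,\nn}} \le \qctr^{\,\nn(\ell)} \enorm{\uu_\ell^\exact - \uu_{\ell,0}} \le \qctr\,\enorm{\uu_\ell^\exact - \uu_{\ell,0}}$. Now use $\uu_{\ell,0} = \uu_{\ell-1,\nn}$ (for $\ell\ge 1$) and insert $\pm\,\uu_{\ell-1}^\exact$:
\begin{align*}
 e_\ell \le \qctr\,\enorm{\uu_\ell^\exact - \uu_{\ell-1,\nn}}
 \le \qctr\,\big( \enorm{\uu_\ell^\exact - \uu_{\ell-1}^\exact} + e_{\ell-1} \big).
\end{align*}
By Lemma~\ref{lem:aprioriconvergence} (together with norm equivalence) we have $\delta_\ell := \enorm{\uu_\ell^\exact - \uu_{\ell-1}^\exact} \to 0$ as $\ell\to\infty$, since both $\uu_\ell^\exact$ and $\uu_{\ell-1}^\exact$ converge to $\uu_\infty^\exact$. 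Hence $e_\ell \le \qctr\, e_{\ell-1} + \qctr\,\delta_\ell$ with $0 < \qctr < 1$ and $\delta_\ell \to 0$.

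It remains to conclude that a sequence satisfying $e_\ell \le \qctr\, e_{\ell-1} + \qctr\,\delta_\ell$ with $\delta_\ell \to 0$ must itself tend to zero; this is the standard ``perturbed contraction'' argument. Given $\varepsilon > 0$, pick $L$ with $\delta_\ell \le \varepsilon$ for all $\ell \ge L$; then for $\ell > L$ an induction yields $e_\ell \le \qctr^{\,\ell-L} e_L + \frac{\qctr}{1-\qctr}\varepsilon$, so $\limsup_{\ell\to\infty} e_\ell \le \frac{\qctr}{1-\qctr}\varepsilon$, and letting $\varepsilon\to 0$ gives $e_\ell \to 0$. (Alternatively, one can invoke an off-the-shelf lemma on convergence of such recursions.) Combining this with the triangle inequality above and switching back from $\enorm\cdot$ to $\norm\cdot{\VV(\Omega)}$ via \eqref{ass:contraction}'s norm equivalence completes the proof.

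I do not expect a genuine obstacle here; the only point requiring a little care is the bookkeeping with the two norms $\norm\cdot{\VV(\Omega)}$ and $\enorm\cdot$ (all estimates should be carried out in $\enorm\cdot$ and converted only at the very start and end) and the handling of the variable iteration count $\nn(\ell)\ge 1$, which is harmless precisely because $\qctr^{\,\nn(\ell)} \le \qctr$. The mild ``perturbed contraction'' limit is elementary.
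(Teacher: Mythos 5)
Your argument is correct and is essentially the paper's own proof: the same nested-iteration recursion $e_\ell \le \qctr\,e_{\ell-1} + \qctr\,\enorm{\uu_\ell^\exact - \uu_{\ell-1}^\exact}$, the same perturbed-contraction limit (which the paper delegates to an elementary-calculus reference rather than spelling out), and the same concluding triangle inequality combined with Lemma~\ref{lem:aprioriconvergence} and norm equivalence.
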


\begin{proof}
Recall the norm $\enorm\cdot$ from~\eqref{ass:contraction}. Since $\nn(\ell+1) \ge 1$ and $\uu_{\ell+1,0} = \uu_{\ell,\nn}$, it holds that
\begin{align*}
 \enorm{\uu_{\ell+1}^\exact - \uu_{\ell+1,\nn}}
 \le \qctr^{\nn(\ell+1)} \, \enorm{\uu_{\ell+1}^\exact - \uu_{\ell+1,0}}
 \le \qctr \, \enorm{\uu_\ell^\exact - \uu_{\ell,\nn}}
 + \qctr \, \enorm{\uu_{\ell+1}^\exact - \uu_\ell^\exact}.
\end{align*}
Let $\alpha_\ell := \enorm{\uu_\ell^\exact - \uu_{\ell,\nn}} \ge 0$. Then, the latter estimate takes the form
\begin{align*} 
 0 \le \alpha_{\ell+1} \le \qctr \, \alpha_\ell + \enorm{\uu_{\ell+1}^\exact - \uu_\ell^\exact},
 \quad \text{where} \quad \lim_{\ell\to\infty} \enorm{\uu_{\ell+1}^\exact - \uu_\ell^\exact} = 0.
\end{align*}
Elementary calculus (see, e.g., the estimator reduction in~\cite[Corollary~4.8]{axioms}) shows that
\begin{align*}
 \norm{\uu_\ell^\exact - \uu_{\ell,\nn}}{\VV(\Omega)}
 \simeq \enorm{\uu_\ell^\exact - \uu_{\ell,\nn}} = \alpha_\ell \to 0
 \quad \text{as } \ell \to \infty.
\end{align*}
Overall, we thus see that
\begin{align*}
 \norm{\uu_\infty^\exact - \uu_{\ell,\nn}}{\VV(\Omega)}
 \le \norm{\uu_\infty^\exact - \uu_\ell^\exact}{\VV(\Omega)}
 + \norm{\uu_\ell^\exact - \uu_{\ell,\nn}}{\VV(\Omega)}
 \to 0 \quad \text{as } \ell \to \infty.
\end{align*}
This concludes the proof.
\end{proof}

\begin{proof}[Proof of Theorem~\ref{thm:convergence:iexact}]
We verify the validity of the building blocks of the proof of~\cite[Theorem~2.1]{Siebert11}  in the case of inexact solutions. This basically follows from the convergence $\enorm{\uu_\infty^\exact - \uu_{\ell,\nn}}\to 0$ shown in Lemma~\ref{lem:convergence}. Indeed, a closer look unveils that we only need to verify~\cite[Lemma~3.5--3.6 and Proposition~3.7]{Siebert11} which we do in the following steps:
  
\textbf{Step 1 (Uniform boundedness):}
From~\eqref{ass:pde}, \eqref{ass:additivity}, and Lemma~\ref{lem:convergence}, it follows that
\begin{align*}
 \eta_\ell(\uu_{\ell,\nn})
 = \norm{\FF - \LL \uu_{\ell,\nn}}{\Omega}
 \le \norm{\FF - \LL \uu_\infty^\exact}{\Omega}
 + \Ccnt \, \norm{\uu_\infty^\exact - \uu_{\ell,\nn}}{\VV(\Omega)}
\end{align*}
and hence $\displaystyle  \sup_{\ell \in \N_0} \eta_\ell(\uu_{\ell,\nn}) < \infty$. 

\textbf{Step 2 (Convergence of estimator on marked elements):}
With~\eqref{ass:pde} and~\eqref{ass:additivity}, it holds that
\begin{align*}
 \eta_\ell(T,\uu_{\ell,\nn})
 = \norm{\FF - \LL \uu_{\ell,\nn}}{T}
 \le \norm{\FF - \LL \uu_\infty^\exact}{T}
 + \Ccnt \, \norm{\uu_\infty^\exact - \uu_{\ell,\nn}}{\VV(\Omega)}.
\end{align*}
By Lemma~\ref{lem:convergence}, the second term tends to zero. Arguing as in the proof of~\cite[Lemma~3.6]{Siebert11} and hence exploiting~\eqref{ass:abscont} and~\eqref{ass:refinement:reduction}--\eqref{ass:refinement:marked}, we see that
\begin{align*}
 \lim_{\ell\to\infty} \max\set{\eta_\ell(T, \uu_{\ell,\nn})}{T\in\MM_\ell} = 0.
\end{align*}

\textbf{Step 3 (Weak convergence of residual):}
We tweak Step~2 in the proof of~\cite[Proposition~3.7]{Siebert11}: Let $\vv \in \DD(\Omega)$ and recall that $\DD(\Omega) \subset \VV(\Omega)$ is dense by assumption~\eqref{ass:localApproximation}. Recall the residual from~\eqref{eq:def:residual}. For the exact least-squares solution $\uu_\ell^\exact \in \VV_\ell(\Omega)$, we may use Galerkin orthogonality~\eqref{eq:galerkinorthogonality} and local boundedness~\eqref{ass:localOperator} to see that
\begin{align*}
 |\dual{\residual(\uu_{\ell,\nn})}{\vv}| 
 &\,=\, | b(\uu^\exact - \uu_{\ell,\nn}, \vv - \AA_\ell\vv )
 + b(\uu^\exact - \uu_{\ell,\nn}, \AA_\ell\vv ) |
 \\ &
 \,\stackrel{\eqref{eq:galerkinorthogonality}}{=}\, | b(\uu^\exact - \uu_{\ell,\nn}, \vv - \AA_\ell\vv )
 + b(\uu_\ell^\exact - \uu_{\ell,\nn}, \AA_\ell\vv ) | 
 \\
 &\!\stackrel{\eqref{ass:pde}}\lesssim\! |\ip{\LL(\uu^\exact - \uu_{\ell,\nn})}{\LL(\vv-\AA_\ell\vv)}_\Omega|
 + \norm{\uu_\ell^\exact - \uu_{\ell,\nn}}{\VV(\Omega)} \norm{\AA_\ell\vv}{\VV(\Omega)}
 \\&
 \stackrel{\eqref{ass:localOperator}}\lesssim \sum_{T\in\TT_\ell} \norm{\LL(\uu^\exact - \uu_{\ell,\nn})}T \norm{\vv-\AA_\ell\vv}{\VV(\Omega_\ell(T))}
 + \norm{\uu_\ell^\exact - \uu_{\ell,\nn}}{\VV(\Omega)} \norm{\AA_\ell\vv}{\VV(\Omega)}
 \\
 &\,=  \sum_{T\in\TT_\ell} \eta_\ell(T, \uu_{\ell,\nn}) \norm{\vv - \AA_\ell\vv}{\VV(\Omega_\ell(T))}
 + \norm{\uu_\ell^\exact - \uu_{\ell,\nn}}{\VV(\Omega)} \norm{\AA_\ell\vv}{\VV(\Omega)}.
\end{align*}
For fixed $\vv \in \DD(\Omega)$, it holds that $\norm{\uu_\ell^\exact-\uu_{\ell,\nn}}{\VV(\Omega)}\norm{\AA_\ell\vv}{\VV(\Omega)}\to 0$ by~\eqref{ass:localApproximation} and Lemma~\ref{lem:convergence}. 
The sum on the right-hand side can be dealt with as in~\cite[Proposition~3.7]{Siebert11}, and we conclude that
\begin{align*}
 \lim_{\ell\to\infty} \dual{\residual(\uu_{\ell,\nn})}\vv = 0 \quad\text{for all }\vv\in \DD(\Omega).
\end{align*}

\textbf{Step 4 (Convergence of inexact solutions and estimators):}
With the auxiliary results established above, we can follow the proof of~\cite[Theorem~2.1]{Siebert11} step by step to deduce that $\norm{\uu^\exact - \uu_{\ell,\nn}}{\VV(\Omega)} \to 0$ as $\ell \to \infty$.
Finally, the equivalence $\norm{\uu^\exact - \uu_{\ell,\nn}}{\VV(\Omega)} \simeq \eta_\ell(\uu_{\ell,\nn})$ concludes the proof.
\end{proof}

\subsection{Preconditioned conjugate gradient method (PCG)}

We recall a well-known result which follows from~\cite[Theorem~11.3.3]{matcomp}. 
The presented form is, e.g., found in~\cite[Lemma~1]{ABEMsolve}.

\begin{lemma}\label{lemma:pcg}
Let $\boldsymbol{A}_\coarse, \boldsymbol{P}_\coarse \in \mathbb{R}^{N \times N}$ be symmetric and positive definite,
$\boldsymbol{b}_\coarse\in \mathbb{R}^N$, $\boldsymbol{x}_\coarse^\exact := \boldsymbol{A}_\coarse^{-1}
\boldsymbol{b}_\coarse$, and $\boldsymbol{x}_{\coarse, 0} \in \mathbb{R}^N$.
Suppose the $\ell_2$-condition number estimate
\begin{align}\label{eq:pcg:condNumber}
 {\rm cond}_2(\boldsymbol{P}_\coarse^{-1/2} \boldsymbol{A}_\coarse \boldsymbol{P}_\coarse^{-1/2}) \le C_{\rm pcg}.
\end{align}
Then, the iterates $\boldsymbol{x}_{\coarse,n}$ of the PCG algorithm satisfy the contraction property
\begin{align}\label{eq:pcg:reduction}
  \norm{\boldsymbol{x}_\coarse^\exact - \boldsymbol{x}_{\coarse,n+1}}{\boldsymbol{A}_\coarse} 
 \le \qctr \, \norm{\boldsymbol{x}_\coarse^\exact - \boldsymbol{x}_{\coarse,n}}{\boldsymbol{A}_\coarse}
 \quad\text{for all } n \in \mathbb{N}_0,
\end{align}
where $\qctr := (1-1/C_{\rm pcg})^{1/2} < 1$ and $\norm{\boldsymbol{y}_\coarse}{\boldsymbol{A}_\coarse}^2 := \boldsymbol{y}_\coarse\cdot\boldsymbol{A}_\coarse\boldsymbol{y}_\coarse$ \ for $\boldsymbol{y}_\coarse \in \R^N$.\qed
\end{lemma}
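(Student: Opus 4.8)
The plan is to bound one PCG step by one step of the (symmetrically) preconditioned Richardson iteration and then reduce everything to a scalar eigenvalue estimate. First I would invoke the classical variational characterization of PCG (see~\cite{matcomp}): since $\boldsymbol{A}_\coarse,\boldsymbol{P}_\coarse$ are symmetric positive definite, the iterates are well defined and the $n$-th iterate minimizes the energy error over the affine Krylov space,
\begin{align*}
 \norm{\boldsymbol{x}_\coarse^\exact-\boldsymbol{x}_{\coarse,n}}{\boldsymbol{A}_\coarse}
 = \min_{\boldsymbol{y}\in\boldsymbol{x}_{\coarse,0}+\mathcal{K}_n}\norm{\boldsymbol{x}_\coarse^\exact-\boldsymbol{y}}{\boldsymbol{A}_\coarse},
 \qquad
 \mathcal{K}_n:=\linhull\set{(\boldsymbol{P}_\coarse^{-1}\boldsymbol{A}_\coarse)^j\boldsymbol{P}_\coarse^{-1}\boldsymbol{r}_{\coarse,0}}{0\le j<n},
\end{align*}
with $\boldsymbol{r}_{\coarse,0}:=\boldsymbol{b}_\coarse-\boldsymbol{A}_\coarse\boldsymbol{x}_{\coarse,0}$. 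Writing $\boldsymbol{r}_{\coarse,n}:=\boldsymbol{b}_\coarse-\boldsymbol{A}_\coarse\boldsymbol{x}_{\coarse,n}=\boldsymbol{r}_{\coarse,0}-\boldsymbol{A}_\coarse(\boldsymbol{x}_{\coarse,n}-\boldsymbol{x}_{\coarse,0})$ and using $\boldsymbol{x}_{\coarse,n}-\boldsymbol{x}_{\coarse,0}\in\mathcal{K}_n$, one sees $\boldsymbol{P}_\coarse^{-1}\boldsymbol{r}_{\coarse,n}\in\mathcal{K}_{n+1}$, hence $\boldsymbol{x}_{\coarse,n}+\alpha\boldsymbol{P}_\coarse^{-1}\boldsymbol{r}_{\coarse,n}\in\boldsymbol{x}_{\coarse,0}+\mathcal{K}_{n+1}$ for all $\alpha\in\R$. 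The minimization property then gives $\norm{\boldsymbol{x}_\coarse^\exact-\boldsymbol{x}_{\coarse,n+1}}{\boldsymbol{A}_\coarse}\le\min_{\alpha\in\R}\norm{\boldsymbol{x}_\coarse^\exact-\boldsymbol{x}_{\coarse,n}-\alpha\boldsymbol{P}_\coarse^{-1}\boldsymbol{r}_{\coarse,n}}{\boldsymbol{A}_\coarse}$.

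Next I would symmetrize the preconditioner. Set $\boldsymbol{B}_\coarse:=\boldsymbol{P}_\coarse^{-1/2}\boldsymbol{A}_\coarse\boldsymbol{P}_\coarse^{-1/2}$, which is symmetric positive definite with ${\rm cond}_2(\boldsymbol{B}_\coarse)\le C_{\rm pcg}$ by~\eqref{eq:pcg:condNumber}, and let $0<\lambda_{\min}\le\lambda_{\max}$ denote its extreme eigenvalues, so that $\lambda_{\max}/\lambda_{\min}\le C_{\rm pcg}$. Using $\boldsymbol{r}_{\coarse,n}=\boldsymbol{A}_\coarse(\boldsymbol{x}_\coarse^\exact-\boldsymbol{x}_{\coarse,n})$ together with the elementary identities $\norm{\boldsymbol{z}}{\boldsymbol{A}_\coarse}=\norm{\boldsymbol{P}_\coarse^{1/2}\boldsymbol{z}}{\boldsymbol{B}_\coarse}$ and $\boldsymbol{P}_\coarse^{1/2}\boldsymbol{P}_\coarse^{-1}\boldsymbol{A}_\coarse=\boldsymbol{B}_\coarse\boldsymbol{P}_\coarse^{1/2}$, the substitution $\widetilde{\boldsymbol{e}}:=\boldsymbol{P}_\coarse^{1/2}(\boldsymbol{x}_\coarse^\exact-\boldsymbol{x}_{\coarse,n})$ converts the bound above into $\min_{\alpha\in\R}\norm{(\boldsymbol{I}-\alpha\boldsymbol{B}_\coarse)\widetilde{\boldsymbol{e}}}{\boldsymbol{B}_\coarse}$, and $\norm{\widetilde{\boldsymbol{e}}}{\boldsymbol{B}_\coarse}=\norm{\boldsymbol{x}_\coarse^\exact-\boldsymbol{x}_{\coarse,n}}{\boldsymbol{A}_\coarse}$. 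Since $\boldsymbol{B}_\coarse$ commutes with $\boldsymbol{I}-\alpha\boldsymbol{B}_\coarse$, hence with $\boldsymbol{B}_\coarse^{1/2}$, one obtains $\norm{(\boldsymbol{I}-\alpha\boldsymbol{B}_\coarse)\widetilde{\boldsymbol{e}}}{\boldsymbol{B}_\coarse}\le\norm{\boldsymbol{I}-\alpha\boldsymbol{B}_\coarse}{2}\,\norm{\widetilde{\boldsymbol{e}}}{\boldsymbol{B}_\coarse}$.

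Finally, I would make a concrete choice of $\alpha$: taking $\alpha:=1/\lambda_{\max}$ places the eigenvalues of $\boldsymbol{I}-\alpha\boldsymbol{B}_\coarse$ in $[0,1-\lambda_{\min}/\lambda_{\max}]$, so $\norm{\boldsymbol{I}-\alpha\boldsymbol{B}_\coarse}{2}=1-\lambda_{\min}/\lambda_{\max}\le1-1/C_{\rm pcg}$. Chaining the displays and using $0\le1-1/C_{\rm pcg}\le(1-1/C_{\rm pcg})^{1/2}=\qctr<1$ then yields~\eqref{eq:pcg:reduction}. (A sharper choice $\alpha:=2/(\lambda_{\min}+\lambda_{\max})$ would even give the optimal steepest-descent rate $(\lambda_{\max}-\lambda_{\min})/(\lambda_{\max}+\lambda_{\min})$, but this is not needed.) The only genuinely delicate point in this plan is the first step --- the variational characterization of the PCG iterate and the verification that its Krylov space contains the preconditioned Richardson update; once that is in place, everything afterwards is routine linear algebra organized around the similarity transform $\boldsymbol{B}_\coarse=\boldsymbol{P}_\coarse^{-1/2}\boldsymbol{A}_\coarse\boldsymbol{P}_\coarse^{-1/2}$.
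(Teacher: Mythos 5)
Your argument is correct, but note that the paper itself contains no proof of Lemma~\ref{lemma:pcg}: the statement is given without proof and delegated to \cite[Theorem~11.3.3]{matcomp} and \cite[Lemma~1]{ABEMsolve}, so any self-contained derivation necessarily differs from what the paper does. Your route is sound in all steps: the Krylov-space optimality of PCG, the verification that $\boldsymbol{P}_\coarse^{-1}\boldsymbol{r}_{\coarse,n}$ lies in the next Krylov space (so that one PCG step is at least as good as one preconditioned Richardson step), the similarity transform to $\boldsymbol{B}_\coarse=\boldsymbol{P}_\coarse^{-1/2}\boldsymbol{A}_\coarse\boldsymbol{P}_\coarse^{-1/2}$, and the commutativity argument giving $\norm{(\boldsymbol{I}-\alpha\boldsymbol{B}_\coarse)\tilde{\boldsymbol{e}}}{\boldsymbol{B}_\coarse}\le\norm{\boldsymbol{I}-\alpha\boldsymbol{B}_\coarse}{2}\,\norm{\tilde{\boldsymbol{e}}}{\boldsymbol{B}_\coarse}$. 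With $\alpha=1/\lambda_{\max}$ you even obtain the one-step factor $1-\lambda_{\min}/\lambda_{\max}\le 1-1/C_{\rm pcg}$, which is sharper than the asserted $\qctr=(1-1/C_{\rm pcg})^{1/2}$, and your final relaxation $t\le t^{1/2}$ on $[0,1]$ legitimately recovers~\eqref{eq:pcg:reduction}. For comparison, the derivation behind the cited results also compares the PCG step with the one-dimensional correction $\alpha\boldsymbol{P}_\coarse^{-1}\boldsymbol{r}_{\coarse,n}$, but optimizes $\alpha$ exactly, which yields the identity $\norm{\boldsymbol{x}_\coarse^\exact-\boldsymbol{x}_{\coarse,n+1}}{\boldsymbol{A}_\coarse}^2\le\norm{\boldsymbol{x}_\coarse^\exact-\boldsymbol{x}_{\coarse,n}}{\boldsymbol{A}_\coarse}^2-(\boldsymbol{r}_{\coarse,n}\cdot\boldsymbol{P}_\coarse^{-1}\boldsymbol{r}_{\coarse,n})^2/\norm{\boldsymbol{P}_\coarse^{-1}\boldsymbol{r}_{\coarse,n}}{\boldsymbol{A}_\coarse}^2$ and, after the substitution $\boldsymbol{s}=\boldsymbol{P}_\coarse^{-1/2}\boldsymbol{r}_{\coarse,n}$, the bound $(\boldsymbol{s}\cdot\boldsymbol{s})^2\ge(\lambda_{\min}/\lambda_{\max})\,(\boldsymbol{s}\cdot\boldsymbol{B}_\coarse\boldsymbol{s})(\boldsymbol{s}\cdot\boldsymbol{B}_\coarse^{-1}\boldsymbol{s})$; this produces the factor $(1-1/C_{\rm pcg})^{1/2}$ directly and explains the square root in the statement, whereas your fixed-$\alpha$ comparison avoids that identity at the harmless cost of invoking $t\le\sqrt{t}$ at the end.
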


Let $\{\boldsymbol{\phi}_{\coarse,1},\dots,\boldsymbol{\phi}_{\coarse,N}\}$ denote a basis of $\VV_\coarse(\Omega)$. Denote with $\boldsymbol{A}_\coarse$ the Galerkin matrix and with $\boldsymbol{b}_\coarse$ the right-hand side of the least-squares method with respect to that basis, i.e., $\boldsymbol A_\coarse[j,k] = b(\boldsymbol{\phi}_{\coarse,k},\boldsymbol{\phi}_{\coarse,j})$ and $\boldsymbol{b}_\coarse[j] = F(\boldsymbol{\phi}_{\coarse,j})$.

There is a one-to-one relation between vectors $\boldsymbol y_\coarse\in\R^N$ and functions $\vv_\coarse\in \VV_\coarse(\Omega)$ given by $\vv_\coarse =
\sum_{j=1}^N \boldsymbol y_\coarse[j] \boldsymbol{\phi}_{\coarse,j}$. 
Let $\uu_{\coarse,n}\in \VV_\coarse(\Omega)$ denote the function corresponding to the iterate $\boldsymbol{x}_{\coarse,n}$.
We note that the least-squares solution $\uu_\coarse^\exact \in \VV_\coarse(\Omega)$ corresponds to the coefficient vector $\boldsymbol{x}_{\bullet}^\exact := \boldsymbol{A}_\coarse^{-1}\boldsymbol{b}_\coarse$.
With the elementary identity
\begin{align*}
\enorm{\uu_\coarse^\exact-\uu_{\coarse,n}}^2 := \norm{\LL(\uu_\coarse^\exact-\uu_{\coarse,n})}{}^2 = b(\uu_\coarse^\exact-\uu_{\coarse,n},\uu_\coarse^\exact-\uu_{\coarse,n}) =
    \norm{\boldsymbol{x}_\coarse^\exact - \boldsymbol{x}_{\coarse,n}}{\boldsymbol{A}_\coarse}^2,
\end{align*}
the contraction property~\eqref{eq:pcg:reduction} thus reads
\begin{align}
  \enorm{\uu_\coarse^\exact-\uu_{\coarse,(n+1)}} \leq \qctr \, 
  \enorm{\uu_\coarse^\exact-\uu_{\coarse,n}} \quad\text{for all } n\in\N_0.
\end{align}
We make the following assumption on the preconditioner.
\begin{enumerate}
\renewcommand{\theenumi}{P}
\bf
\item\label{ass:preconditioner}
\rm
\textbf{Optimal preconditioner:} For all $\TT_\coarse \in \TTT$, there exists a symmetric preconditioner $\boldsymbol{P}_\bullet$ of the Galerkin matrix $\boldsymbol{A}_\bullet$ such that the constant in~\eqref{eq:pcg:condNumber} depends only on the initial mesh $\TT_0$.
\end{enumerate}
Under this assumption, PCG fits into the abstract framework from Section~\ref{sec:inexact}.

\def\RT{\boldsymbol{RT}}
\def\cS{S}
\section{Examples}\label{sec:examples}

In this section, we consider some common model problems. Throughout, we assume that one of the marking strategies from Section~\ref{sec:marking} is used.
Moreover, we assume that $\TT_0$ is a conforming simplicial triangulation of some bounded Lipschitz domain $\Omega \subset \R^d$ with $d = 2, 3$ and NVB is used for mesh-refinement (see Section~\ref{sec:mesh}). In particular, there holds assumption~\eqref{ass:marking} as well as~\eqref{ass:refinement:reduction}--\eqref{ass:refinement:marked}.
To conclude convergence of ALSFEM, it therefore remains to show that the assumptions~\eqref{ass:pde}--\eqref{ass:abscont},~\eqref{ass:localOperator}, and~\eqref{ass:conformity}--\eqref{ass:localApproximation} are satisfied for the following examples.

\subsection{Poisson problem}\label{sec:poisson}

For given $f\in L^2(\Omega)$, consider the Poisson problem
\begin{subequations}\label{eq:poisson}
\begin{alignat}{2}
  -\Delta u &= f &\quad&\text{in }\Omega, \\
  u &= 0 &\quad&\text{on } \Gamma:=\partial\Omega.
\end{alignat}
\end{subequations}
With the substitution $\ssigma := \nabla u$, this is equivalently reformulated as a first-order system
\begin{subequations}\label{eq:fo}
\begin{alignat}{2}
  -\div \ssigma &= f &\quad&\text{in }\Omega, \\
  \nabla u - \ssigma &= 0 &\quad&\text{in }\Omega, \\
  u &= 0 &\quad&\text{on } \Gamma.
\end{alignat}
\end{subequations}
With the Hilbert space
\begin{align}\label{eq:poisson:space}
  \VV(\Omega) &:= H_0^1(\Omega) \times \Hdivset\Omega,
\end{align}
the first-order system~\eqref{eq:fo} can equivalently be recast in the abstract form~\eqref{eq:modelAbstract}, i.e.,
\begin{align}\label{eq:poisson:first-order}
 \LL \begin{pmatrix} u \\ \ssigma \end{pmatrix}
 := \begin{pmatrix} -\div\ssigma \\ \nabla u-\ssigma \end{pmatrix}
 = \begin{pmatrix} f \\ \boldsymbol{0} \end{pmatrix} =: \FF \in L^2(\Omega)^{d+1}.
\end{align}
It is well-known (see, e.g., the textbook~\cite{BochevGunzburgerLSQ}) that $\LL \colon \V(\Omega) \to L^2(\Omega)^{d+1}$ is an isomorphism, so that~\eqref{ass:pde}--\eqref{ass:solvability} are guaranteed. Clearly,~\eqref{ass:additivity}--\eqref{ass:abscont} are satisfied, since $\norm{\cdot}{\V(\Omega)}$ relies on the Lebesgue norm $\norm{\cdot}{\Omega}$.
Moreover, assumption~\eqref{ass:localOperator} follows from
\begin{align*}
  \norm{\LL(v,\ttau)}{T}^2 = \norm{\div\ttau}{T}^2 + \norm{\nabla v - \ttau}{T}^2
  \lesssim \norm{\nabla v}{T}^2 + \norm{v}{T}^2 + \norm{\div\ttau}T^2 + \norm{\ttau}{T}^2
  = \norm{(v,\ttau)}{{\VV(T)}}^2.
\end{align*}
A common FEM discretization of the energy space~\eqref{eq:poisson:space} involves the conforming subspace
\begin{align}\label{eq:poisson:discrete}
  \VV_\coarse(\Omega) := \cS_0^{k+1}(\TT_\coarse) \times \RT^k(\TT_\coarse) \subset \V(\Omega)
\end{align}
where $\cS_0^{k+1}(\TT_\coarse) \subseteq H^1_0(\Omega)$ is the usual Courant FEM space of order $k+1$ and $\RT^k(\TT_\coarse) \subset \Hdivset{\Omega}$ is the Raviart--Thomas FEM space of order
$k\in\N_0$; see, e.g.,~\cite{BoffiBrezziFortin}. Clearly, there hold~\eqref{ass:conformity}--\eqref{ass:nested}, since $\TT_\fine \in \refine(\TT_\coarse)$ yields that $\VV_\coarse(\Omega) \subseteq \V_\fine(\Omega) \subset \V(\Omega)$.

\def\II{\mathcal{I}}
\def\RR{\mathcal{R}}
It remains to show the local approximation property~\eqref{ass:localApproximation}. To that end, recall that $C_c^\infty(\overline\Omega)$ is dense in $H^1_0(\Omega)$ and $C^\infty(\overline\Omega)^d$ is dense in $\Hdivset\Omega$; see, e.g.,~\cite[Theorem~2.4]{GiraultRaviartBook}. Therefore, 
\begin{align*}
 C_c^\infty(\overline\Omega) \times C^\infty(\overline\Omega)^d 
 \subset \DD(\Omega) := \big[H^2(\Omega) \cap H^1_0(\Omega) \big] \times H^2(\Omega)^d 
 \subset \V(\Omega) \text{ is dense}.
\end{align*}
Let $\II_\coarse \colon H^2(\Omega) \to \cS^1(\TT_\coarse)$ be the nodal interpolation operator onto the first-order Courant space. Then, it holds that
\begin{align*}
 \norm{v - \II_\coarse v}{H^1(T)} \lesssim h_T \norm{v}{H^2(T)}
  \quad \text{for all } v \in H^2(\Omega) \cap H^1_0(\Omega) \text{ and all } T \in \TT_\coarse.
\end{align*}
Let $\RR_\coarse \colon H^1(\Omega) \to \RT^0(\TT_\coarse)$ be the Raviart--Thomas projector onto the lowest-order Raviart--Thomas space.
Then, it holds that
\begin{align*}
 \norm{\ttau - \RR_\coarse \ttau}{\Hdivset{T}} \lesssim h_T \norm{\ttau}{H^2(T)}
  \quad \text{for all } \ttau \in H^2(\Omega)^d \text{ and all } T \in \TT_\coarse.
\end{align*}
Overall, the approximation operator $\AA_\coarse \colon \DD(\Omega) \to \V_\coarse(\Omega)$ defined by $\AA_\coarse(v, \ttau) := (\II_\coarse v, \RR_\coarse \ttau)$  satisfies~\eqref{ass:localApproximation} with $s=1$.

\subsection{General second-order problem}
\label{section:example:general}
Given $f\in L^2(\Omega)$, we consider the general second-order linear elliptic PDE 
\begin{subequations}\label{eq:general}
\begin{alignat}{2}
  -\div(\boldsymbol{A}\nabla u) + \boldsymbol{b}\cdot\nabla u + c\, u &= f &\quad&\text{in }\Omega, \\
  u &= 0 &\quad&\text{on } \Gamma,
\end{alignat}
\end{subequations}
where $\boldsymbol{A}_{jk},\, \boldsymbol{b}_j, \, c \in L^\infty(\Omega)$, and $\boldsymbol{A}$ is symmetric and uniformly positive definite. It follows from the Fredholm alternative that existence and uniqueness of the weak solution $u \in H^1_0(\Omega)$ to~\eqref{eq:general} is equivalent to the well-posedness of the homogeneous problem, i.e.,
\begin{align*}
  a(u_0,v) := \dual{\boldsymbol{A}\nabla u_0}{\nabla v}_\Omega + \dual{\boldsymbol{b}\cdot\nabla u_0 + c\, u_0}{v}_\Omega 
\end{align*}
satisfies that
\begin{align}\label{eq:general:well-posedness}
 \forall u_0 \in H^1_0(\Omega): \Big( \big[ \forall v \in H^1_0(\Omega)\quad a(u_0,v)=0 \big] \quad \Longrightarrow \quad
 u_0 = 0 \Big);
\end{align}
see, e.g.,~\cite{brenner-scott}. Clearly, the general problem~\eqref{eq:general} does not only cover the Poisson problem from
Section~\ref{sec:poisson} (with $\boldsymbol{A}$ being the identity, $\boldsymbol{b}=0$, and $c = 0$, where~\eqref{eq:general:well-posedness} follows from the Poincar\'e inequality), but also the Helmholtz problem (with $\boldsymbol{A}$ being the identity, $\boldsymbol{b}=0$, and
$c=-\omega^2<0$, provided that $\omega^2$ is not an eigenvalue of the Dirichlet--Laplace eigenvalue problem).

The first-order reformulation of~\eqref{eq:general} reads
\begin{align}\label{eq:general:first-order}
  \LL\begin{pmatrix} u\\ \ssigma \end{pmatrix} :=
  \begin{pmatrix}
    -\div\ssigma + \boldsymbol{b}\cdot\nabla u + c\, u \\
    \boldsymbol{A}\nabla u - \ssigma
  \end{pmatrix}
  = 
  \begin{pmatrix}
    f\\ \boldsymbol{0}
  \end{pmatrix}
  =: \FF \in L^2(\Omega)^{d+1}.
\end{align}
With the Hilbert space $\VV(\Omega)$ from~\eqref{eq:poisson:space}
and provided that problem~\eqref{eq:general} admits a unique solution $u\in H^1(\Omega)$, we
conclude with~\cite[Theorem~3.1]{CLMMcC1994} that~\eqref{ass:pde}--\eqref{ass:solvability} are satisfied.
As in Section~\ref{sec:poisson}, the assumptions~\eqref{ass:additivity}--\eqref{ass:abscont} are clearly satisfied and~\eqref{ass:localOperator} follows as all coefficients of $\LL$ are bounded.

As before, a common FEM discretization involves the conforming subspace $\VV_\coarse(\Omega)$ from~\eqref{eq:poisson:discrete},
and~\eqref{ass:conformity}--\eqref{ass:localApproximation} follow as in Section~\ref{sec:poisson}. Overall, there holds the following convergence result:

\begin{proposition}\label{proposition:example:general}
Under the well-posedness assumption~\eqref{eq:general:well-posedness} and for any marking strategy satisfying~\eqref{ass:marking}, the adaptive least-squares formulation of~\eqref{eq:general:first-order} based on $\VV_\coarse(\Omega)$ from~\eqref{eq:poisson:discrete} guarantees plain convergence in the sense of Theorem~\ref{thm:convergence} and Theorem~\ref{thm:convergence:iexact}.\qed
\end{proposition}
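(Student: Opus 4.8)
The plan is to check, one by one, the hypotheses \eqref{ass:pde}--\eqref{ass:abscont}, \eqref{ass:localOperator}, and \eqref{ass:conformity}--\eqref{ass:localApproximation} for the first-order system \eqref{eq:general:first-order} with the energy space $\VV(\Omega)$ from \eqref{eq:poisson:space} and the discrete spaces \eqref{eq:poisson:discrete}, since the text of Section~\ref{sec:examples} already records that the refinement assumptions \eqref{ass:refinement:reduction}--\eqref{ass:refinement:marked} and the marking assumption \eqref{ass:marking} hold for NVB together with any of the listed marking strategies. Once all these structural assumptions are verified, Theorem~\ref{thm:convergence} and Theorem~\ref{thm:convergence:iexact} apply verbatim and yield the asserted plain convergence. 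In fact, most of this has already been spelled out in the running text immediately preceding the proposition, so the proof is essentially a bookkeeping exercise that assembles those observations.

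Concretely, I would proceed as follows. First, \eqref{ass:pde}--\eqref{ass:solvability}: invoke \cite[Theorem~3.1]{CLMMcC1994}, which under the standing assumptions on $\boldsymbol A,\boldsymbol b,c$ and the well-posedness \eqref{eq:general:well-posedness} of the homogeneous scalar problem, establishes that $\LL\colon\VV(\Omega)\to L^2(\Omega)^{d+1}$ is an isomorphism; this gives the two-sided bound of \eqref{ass:pde} and, since $\FF=(f,\boldsymbol 0)\in L^2(\Omega)^{d+1}=\ran(\LL)$, also \eqref{ass:solvability}. Second, \eqref{ass:additivity}--\eqref{ass:abscont}: the norm on $\VV(\Omega)=H^1_0(\Omega)\times\Hdivset\Omega$ is built entirely from $L^2$-norms of $v,\nabla v,\ttau,\div\ttau$, hence it is additive over disjoint subdomains and absolutely continuous with respect to Lebesgue measure, exactly as in Section~\ref{sec:poisson}. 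Third, \eqref{ass:localOperator}: because every coefficient $\boldsymbol A_{jk},\boldsymbol b_j,c\in L^\infty(\Omega)$, the estimate $\norm{\LL(v,\ttau)}{T}\le\Cloc\,\norm{(v,\ttau)}{\VV(T)}$ holds with $\Cloc$ depending only on $\|\boldsymbol A\|_{L^\infty},\|\boldsymbol b\|_{L^\infty},\|c\|_{L^\infty}$; note this is even a local bound on $T$ itself (no patch needed). Fourth, \eqref{ass:conformity}--\eqref{ass:nested}: the discrete space $\VV_\coarse(\Omega)=\cS^{k+1}_0(\TT_\coarse)\times\RT^k(\TT_\coarse)$ is finite-dimensional and conforming in $\VV(\Omega)$, and NVB-refinement nests both the Courant and the Raviart--Thomas spaces. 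Fifth, \eqref{ass:localApproximation}: take $\DD(\Omega)=[H^2(\Omega)\cap H^1_0(\Omega)]\times H^2(\Omega)^d$, which is dense in $\VV(\Omega)$ and has an additive norm, and set $\AA_\coarse(v,\ttau):=(\II_\coarse v,\RR_\coarse\ttau)$ with $\II_\coarse$ the nodal interpolant into $\cS^1(\TT_\coarse)\subset\cS^{k+1}_0(\TT_\coarse)$ and $\RR_\coarse$ the lowest-order Raviart--Thomas projector into $\RT^0(\TT_\coarse)\subset\RT^k(\TT_\coarse)$; the standard local estimates $\norm{v-\II_\coarse v}{H^1(T)}\lesssim h_T\norm{v}{H^2(T)}$ and $\norm{\ttau-\RR_\coarse\ttau}{\Hdivset T}\lesssim h_T\norm{\ttau}{H^2(T)}$ then give \eqref{ass:localApproximation} with $s=1$. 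This is precisely the argument already carried out in Section~\ref{sec:poisson}, and it transfers unchanged because the discrete spaces are identical.

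Having verified all hypotheses, I would conclude: by Theorem~\ref{thm:convergence}, Algorithm~\ref{algorithm} yields $\norm{\uu^\exact-\uu_\ell^\exact}{\VV(\Omega)}+\eta_\ell(\uu_\ell^\exact)\to0$; additionally, since the least-squares Galerkin matrix is symmetric positive definite and, under \eqref{ass:preconditioner}, PCG fits the contractive-solver framework \eqref{ass:contraction}, Theorem~\ref{thm:convergence:iexact} gives $\norm{\uu^\exact-\uu_{\ell,\nn}}{\VV(\Omega)}+\eta_\ell(\uu_{\ell,\nn})\to0$ for Algorithm~\ref{algorithm:inexact}. I do not expect any genuine obstacle here: the only nontrivial input is the isomorphism property from \cite{CLMMcC1994}, which is imported as a black box, and everything else is routine finite-element approximation theory already displayed in the Poisson section. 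The mild care needed is to note that including lower-order terms $\boldsymbol b\cdot\nabla u+c\,u$ does not affect locality of $\LL$ (bounded coefficients) nor the approximation step (same discrete spaces), so the Poisson argument really does carry over verbatim.
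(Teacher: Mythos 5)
Your proposal is correct and follows essentially the same route as the paper: the proposition is proved exactly by verifying~\eqref{ass:pde}--\eqref{ass:solvability} via~\cite[Theorem~3.1]{CLMMcC1994}, noting that~\eqref{ass:additivity}--\eqref{ass:abscont}, \eqref{ass:localOperator} (bounded coefficients), and~\eqref{ass:conformity}--\eqref{ass:localApproximation} carry over verbatim from the Poisson case, and then invoking Theorem~\ref{thm:convergence} and Theorem~\ref{thm:convergence:iexact}. Nothing further is needed.
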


\def\0{\boldsymbol{0}}
\def\HHH{\underline{\boldsymbol{H}\!}\hspace*{.05em}}
\def\identity{\boldsymbol{I}}

\subsection{Linear Elasticity}\label{sec:lame}
Given $\ff\in\boldsymbol{L}^2(\Omega):=L^2(\Omega)^d$ find the displacement $\uu\in \HH_0^1(\Omega) := H_0^1(\Omega)^d$ and the stress $\MMM\in\HDivset\Omega := \set{\MMM\in L^2(\Omega)^{d\times d}}{\Div\MMM \in \boldsymbol{L}^2(\Omega)}$ satisfying 
\begin{align*}
  -\Div\MMM &= \ff, \\
  \MMM - \mathbb{C}\boldsymbol{\epsilon}(\uu) &= \0.
\end{align*}
Here, $\Div(\cdot)$ denotes the divergence operator applied to each row and $\boldsymbol\epsilon(\cdot) = \tfrac12(\Grad(\cdot)+\Grad(\cdot)^\top)$ is the symmetric gradient, where $(\Grad \boldsymbol{v})_{ij} = \partial_j \boldsymbol{v}_i$ is the Jacobian.
Given the Lam\'e parameters $\lambda, \mu >0$, the positive definite elasticity tensor is given by
\begin{align*}
  \mathbb{C}\boldsymbol{N} = (\lambda\tr\boldsymbol{N})\identity + 2\mu \boldsymbol{N}
\end{align*}
with $\identity$ denoting the $d\times d$ identity matrix and $\tr\boldsymbol{N} = \sum_{j=1}^d \boldsymbol{N}_{jj}$ the trace operator.
Consider the Hilbert space
\begin{align}\label{eq:lame}
  \VV(\Omega) := \HH_0^1(\Omega) \times \HHH(\Div;\Omega)
\end{align}
equipped with the norm
\begin{align*}
  \norm{(\vv,\boldsymbol{N})}{\VV(\Omega)}^2 = \norm{\mathbb{C}^{-1/2}\boldsymbol{N}}{\Omega}^2 + \norm{\Div\boldsymbol{N}}{\Omega}^2 + 
  \norm{\mathbb{C}^{1/2}\boldsymbol\epsilon(\vv)}{\Omega}^2,
\end{align*}
which is equivalent to the canonical norm by Korn's inequality and the properties of the elasticity tensor.
Then, the first-order system can be put into the abstract form~\eqref{eq:modelAbstract}, i.e.,
\begin{align}\label{eq:lame:first-order}
  \LL\begin{pmatrix} \uu \\ \MMM \end{pmatrix} := 
  \begin{pmatrix} -\Div\MMM \\ \mathbb{C}^{-1/2}\MMM - \mathbb{C}^{1/2}\boldsymbol{\epsilon}(\uu) \end{pmatrix}
  = \begin{pmatrix} \ff \\ \0 \end{pmatrix} =: \FF \in L^2(\Omega)^{d+d^2},
\end{align}
where we identify $d\times d$ matrices with $d^2\times 1$ vectors. 
It is well known that the linear elasticity problem is well-posed so that~\eqref{ass:pde}--\eqref{ass:solvability} are satisfied; see, e.g.,~\cite[Theorem~2.1]{CaiKorsaweStarke2005}.
The validity of the assumptions~\eqref{ass:additivity}--\eqref{ass:abscont} and~\eqref{ass:localOperator} follows as in Section~\ref{sec:poisson}.
Consider the conforming discrete space
\begin{align}\label{eq:lame:spaces:discrete}
  \VV_\coarse(\Omega) = \cS_0^{k+1}(\TT_\coarse)^d \times \RT^k(\TT_\bullet)^d \subset \VV(\Omega).
\end{align}
Here, $\RT^k(\TT_\bullet)^d$ denotes the space of matrix-valued functions, where each row is an element of $\RT^k(\TT_\coarse)$.
As in Section~\ref{sec:poisson}, we conclude that the assumptions~\eqref{ass:conformity}--\eqref{ass:localApproximation} are satisfied.
Overall, we then have the following result:
\begin{proposition}
For any marking strategy satisfying~\eqref{ass:marking}, the adaptive least-squares formulation of~\eqref{eq:lame:first-order} based on $\VV_\coarse(\Omega)$ from~\eqref{eq:lame:spaces:discrete} guarantees plain convergence in the sense of Theorem~\ref{thm:convergence} and Theorem~\ref{thm:convergence:iexact}.\qed
\end{proposition}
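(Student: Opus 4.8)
The plan is to deduce the statement from Theorem~\ref{thm:convergence} and Theorem~\ref{thm:convergence:iexact} by verifying that the first-order elasticity system~\eqref{eq:lame:first-order} together with the discrete spaces~\eqref{eq:lame:spaces:discrete} meets all structural hypotheses of these two theorems. By the standing conventions of Section~\ref{sec:examples} (NVB on a conforming simplicial triangulation of a bounded Lipschitz domain), the marking assumption~\eqref{ass:marking} and the refinement axioms~\eqref{ass:refinement:reduction}--\eqref{ass:refinement:marked} are already in force, and --- invoking PCG with an optimal preconditioner~\eqref{ass:preconditioner} via Lemma~\ref{lemma:pcg} --- the contractive-solver assumption~\eqref{ass:contraction} holds as well. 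Hence I only need to check~\eqref{ass:pde}--\eqref{ass:abscont}, the locality~\eqref{ass:localOperator}, and the discrete assumptions~\eqref{ass:conformity}--\eqref{ass:localApproximation}.

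For~\eqref{ass:pde}--\eqref{ass:solvability}, I would recall that with the weighted norm on $\VV(\Omega)$ from~\eqref{eq:lame} the operator $\LL$ of~\eqref{eq:lame:first-order} is an isomorphism onto $L^2(\Omega)^{d+d^2}$; this is precisely~\cite[Theorem~2.1]{CaiKorsaweStarke2005}, the weights $\mathbb{C}^{\pm1/2}$ being introduced exactly so that $\LL$ becomes norm-equivalent to an isometry, while $\FF\in\ran(\LL)$ is inherited from solvability of the strong elasticity problem. Assumptions~\eqref{ass:additivity}--\eqref{ass:abscont} then follow verbatim as in Section~\ref{sec:poisson}, since $\norm{(\vv,\boldsymbol{N})}{\VV(\cdot)}^2$ is a sum of $L^2(\cdot)$-norms, hence additive over disjoint subdomains and absolutely continuous with respect to the Lebesgue measure. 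Finally~\eqref{ass:localOperator} holds because the coefficients of $\LL$ (the entries of $\mathbb{C}^{\pm1/2}$) are bounded and $\Div$, $\boldsymbol\epsilon(\cdot)$ are first-order differential operators, so that $\norm{\LL(\vv,\boldsymbol{N})}{T}^2 \lesssim \norm{(\vv,\boldsymbol{N})}{\VV(T)}^2 \le \norm{(\vv,\boldsymbol{N})}{\VV(\Omega_\coarse(T))}^2$ for all $T\in\TT_\coarse$.

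For the discrete assumptions, conformity and finite-dimensionality~\eqref{ass:conformity} of $\VV_\coarse(\Omega) = \cS_0^{k+1}(\TT_\coarse)^d \times \RT^k(\TT_\coarse)^d$ are immediate, and nestedness~\eqref{ass:nested} follows from $\cS_0^{k+1}(\TT_\coarse)\subseteq\cS_0^{k+1}(\TT_\fine)$ and $\RT^k(\TT_\coarse)\subseteq\RT^k(\TT_\fine)$ under refinement. For~\eqref{ass:localApproximation}, I would take $\DD(\Omega) := \bigl[H^2(\Omega)\cap H^1_0(\Omega)\bigr]^d \times H^2(\Omega)^{d\times d}$, which is dense in $\VV(\Omega)$ (density of smooth functions in $\HH^1_0(\Omega)$ and in $\HHH(\Div;\Omega)$, applied componentwise, resp.\ row-wise), and set $\AA_\coarse(\vv,\boldsymbol{N}) := (\II_\coarse\vv, \RR_\coarse\boldsymbol{N})$ with $\II_\coarse$ the nodal interpolant applied componentwise and $\RR_\coarse$ the Raviart--Thomas projector applied row-wise. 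The componentwise first-order estimates $\norm{\vv-\II_\coarse\vv}{H^1(T)}\lesssim h_T\,\norm{\vv}{H^2(T)}$ and $\norm{\Div(\boldsymbol{N}-\RR_\coarse\boldsymbol{N})}{T} + \norm{\boldsymbol{N}-\RR_\coarse\boldsymbol{N}}{T}\lesssim h_T\,\norm{\boldsymbol{N}}{H^2(T)}$, together with the equivalence of the weighted $\VV$-norm and the canonical graph norm, yield~\eqref{ass:localApproximation} with $s=1$. With all hypotheses checked, Theorem~\ref{thm:convergence} gives plain convergence of Algorithm~\ref{algorithm} and Theorem~\ref{thm:convergence:iexact} gives plain convergence of Algorithm~\ref{algorithm:inexact}.

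The only genuinely non-routine point is~\eqref{ass:pde}: one must confirm that the cited well-posedness result delivers the \emph{two-sided} bound in precisely the weighted norm~\eqref{eq:lame}, which rests on Korn's (second) inequality on $\HH^1_0(\Omega)$ and on the uniform spectral bounds of $\mathbb{C}$ guaranteed by $\lambda,\mu>0$. Everything else is a direct transcription of the Poisson argument of Section~\ref{sec:poisson}.
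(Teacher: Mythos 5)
Your verification is correct and follows essentially the same route as the paper: cite \cite[Theorem~2.1]{CaiKorsaweStarke2005} for~\eqref{ass:pde}--\eqref{ass:solvability} (with the weighted norm handled via Korn's inequality and the spectral bounds of $\mathbb{C}$), and transfer~\eqref{ass:additivity}--\eqref{ass:abscont}, \eqref{ass:localOperator}, and \eqref{ass:conformity}--\eqref{ass:localApproximation} componentwise/row-wise from the Poisson argument of Section~\ref{sec:poisson}. Your added remark that~\eqref{ass:contraction} is supplied by PCG with an optimal preconditioner (Lemma~\ref{lemma:pcg}) is a harmless elaboration consistent with the paper's framework.
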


\subsection{Maxwell problem}\label{sec:maxwell}
We consider the case $d=3$ only. Given $\ff\in \boldsymbol{L}^2(\Omega)$ and $c\in L^\infty(\Omega)$, find 
$\uu\in\boldsymbol{H}_0(\ccurl;\Omega)$ and $\ssigma\in \boldsymbol{H}(\ccurl;\Omega)$ satisfying
\begin{align*}
  \ccurl \ssigma + c\, \uu &= \ff, \\
  \ccurl \uu - \ssigma &= \0,
\end{align*}
where
\begin{align*}
  \boldsymbol{H}(\ccurl;\Omega) &:= \set{\vv\in \boldsymbol{L}^2(\Omega)}{\ccurl\vv\in\boldsymbol{L}^2(\Omega)},
  \\
  \boldsymbol{H}_0(\ccurl;\Omega) &:= \set{\vv\in \boldsymbol{H}(\ccurl;\Omega)}{\vv\times\normal|_{\partial\Omega} = 0}.
\end{align*}
With the Hilbert space
\begin{align}\label{eq:maxwell:spaces}
  \VV(\Omega) := \boldsymbol{H}_0(\ccurl;\Omega) \times \boldsymbol{H}(\ccurl;\Omega)
\end{align}
equipped with the norm
\begin{align*}
  \norm{(\vv,\ttau)}{\VV(\Omega)}^2 = \norm{\vv}{\Omega}^2 
  + \norm{\ccurl\vv}{\Omega}^2 + \norm{\ttau}{\Omega}^2 
  + \norm{\ccurl\ttau}{\Omega}^2,
\end{align*}
the first-order system can be written in the abstract form~\eqref{eq:modelAbstract}, i.e.,
\begin{align}\label{eq:maxwell:first-order}
 \LL \begin{pmatrix} \uu \\ \ssigma \end{pmatrix}
 := \begin{pmatrix} \ccurl\ssigma + c\uu\\ \ccurl\uu-\ssigma \end{pmatrix}
 = \begin{pmatrix} \ff \\ \0 \end{pmatrix} =: \FF \in L^2(\Omega)^{6}.
\end{align}
The assumptions~\eqref{ass:pde}--\eqref{ass:solvability} are satisfied if $\operatorname{ess}\inf_{x\in\Omega} c> 0$ or $c=-\omega^2< 0$ and $\omega^2$ is not an eigenvalue of the cavity problem. The argumentation is similar to the Poisson case. 
For the case $c=-\omega^2$, we also refer to~\cite[Section~2.4]{CarstensenStorn2018}.
The validity of the assumptions~\eqref{ass:additivity}--\eqref{ass:abscont} and~\eqref{ass:localOperator} follows as in Section~\ref{sec:poisson}.
Using the conforming discrete space
\begin{align}\label{eq:maxwell:spaces:discrete}
  \VV_\coarse(\Omega) = \boldsymbol{N}_0^k(\TT_\bullet) \times \boldsymbol{N}^k(\TT_\bullet) 
  := (\boldsymbol{N}^k(\TT_\bullet)\cap \boldsymbol{H}_0(\ccurl;\Omega)) \times \boldsymbol{N}^k(\TT_\bullet)\subset \VV(\Omega),
\end{align}
where $\boldsymbol{N}^k(\TT_\bullet)$ denotes the N\'ed\'elec space of order $k\in \N_0$, this proves that the assumptions~\eqref{ass:conformity}--\eqref{ass:nested} are satisfied.
It remains to verify the local approximation property~\eqref{ass:localApproximation}.
Recall from~\cite{MonkMaxwell} that $C_c^\infty(\overline\Omega)^3$ is dense in $\boldsymbol{H}_0(\ccurl;\Omega)$ and $C^\infty(\overline\Omega)^3$ is dense in $\boldsymbol{H}(\ccurl;\Omega)$. This shows that
\begin{align*}
  C_c^\infty(\overline\Omega)^3\times C^\infty(\overline\Omega)^3 
  \subset  \DD(\Omega):=(H^2(\Omega)\cap H_0^1(\Omega))^3\times H^2(\Omega)^3 \subset \VV(\Omega)
\end{align*}
is dense.
Let $\mathcal{N}_\bullet\colon H^1(\Omega)^3\to \boldsymbol{N}^k(\TT_\bullet)$ denote the edge interpolation operator which satisfies
\begin{align*}
  \norm{\vv-\mathcal{N}_\bullet\vv}{\boldsymbol{H}(\ccurl;T)} \lesssim h_T \norm{\vv}{H^2(T)} \quad\text{for all }
  \vv\in H^2(\Omega)^3 \text{ and all } T\in\TT_\bullet;
\end{align*}
see, e.g.,~\cite[Section~3.5--3.6]{hiptmair2002}.
Note that the projection $\mathcal{N}_\bullet$ by definition also preserves homogeneous boundary conditions.
Therefore, $\AA_\bullet\colon \DD(\Omega)\to\VV_\bullet(\Omega)$ defined by $\AA_\bullet(\vv,\ttau) = (\RR_\bullet \vv,\RR_\bullet\ttau)$ satisfies assumption~\eqref{ass:localApproximation}.
Overall, we have the following result:

\begin{proposition}
For any marking strategy satisfying~\eqref{ass:marking}, the adaptive least-squares formulation of~\eqref{eq:maxwell:first-order} based on $\VV_\coarse(\Omega)$ from~\eqref{eq:maxwell:spaces:discrete} guarantees plain convergence in the sense of Theorem~\ref{thm:convergence} and Theorem~\ref{thm:convergence:iexact}.\qed
\end{proposition}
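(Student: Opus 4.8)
The plan is to follow the same three-step recipe used for the Poisson problem in Section~\ref{sec:poisson}: verify the abstract assumptions~\eqref{ass:pde}--\eqref{ass:solvability}, then~\eqref{ass:additivity}--\eqref{ass:abscont} and~\eqref{ass:localOperator}, and finally~\eqref{ass:conformity}--\eqref{ass:localApproximation}, after which Theorem~\ref{thm:convergence} and Theorem~\ref{thm:convergence:iexact} apply verbatim. For~\eqref{ass:pde}--\eqref{ass:solvability} I would invoke the standard well-posedness theory for the first-order system least-squares formulation of the time-harmonic Maxwell system. In the definite case $\operatorname{ess}\inf_x c > 0$ one argues, as for Poisson, that $\LL\colon\VV(\Omega)\to L^2(\Omega)^6$ is bounded below: from $\ccurl\uu-\ssigma$ controlled in $\boldsymbol L^2$ one recovers $\norm{\ccurl\uu}{\Omega}$ up to $\norm{\ssigma}{\Omega}$, from $\ccurl\ssigma+c\uu$ one recovers $\norm{c\uu}{\Omega}\gtrsim\norm{\uu}{\Omega}$ up to $\norm{\ccurl\ssigma}{\Omega}$, and combining these with the graph norms closes the estimate; boundedness above is immediate since $c\in L^\infty$. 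For the sign-indefinite case $c=-\omega^2$ one instead uses a Fredholm/compactness argument together with the assumption that $\omega^2$ is not a cavity eigenvalue, exactly as cited in~\cite[Section~2.4]{CarstensenStorn2018}. Assumption~\eqref{ass:solvability} is then the trivial observation that $(\ff,\0)\in\ran(\LL)$.

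Assumptions~\eqref{ass:additivity} and~\eqref{ass:abscont} are immediate because the $\VV(\Omega)$-norm is built entirely from Lebesgue norms of $\vv,\ccurl\vv,\ttau,\ccurl\ttau$, so localization to a subdomain and absolute continuity with respect to $|\omega|$ follow just as in Section~\ref{sec:poisson}. For~\eqref{ass:localOperator} I would estimate, for $(v,\ttau)\in\VV(\Omega)$ and $T\in\TT_\coarse$,
\begin{align*}
 \norm{\LL(v,\ttau)}{T}^2 = \norm{\ccurl\ttau + cv}{T}^2 + \norm{\ccurl v-\ttau}{T}^2
 \lesssim \norm{v}{T}^2 + \norm{\ccurl v}{T}^2 + \norm{\ttau}{T}^2 + \norm{\ccurl\ttau}{T}^2 = \norm{(v,\ttau)}{\VV(T)}^2,
\end{align*}
where the hidden constant depends only on $\norm{c}{L^\infty(\Omega)}$; in particular the bound holds with $\Omega_\coarse(T)$ replaced even by $T$ itself, so~\eqref{ass:localOperator} is satisfied with $\Cloc$ independent of the mesh.

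For the discrete spaces~\eqref{ass:conformity}--\eqref{ass:nested}: the N\'ed\'elec spaces $\boldsymbol N^k(\TT_\coarse)$ are $\boldsymbol H(\ccurl;\Omega)$-conforming and finite dimensional, and NVB-refinement yields nestedness $\boldsymbol N^k(\TT_\coarse)\subseteq\boldsymbol N^k(\TT_\fine)$ and hence $\VV_\coarse(\Omega)\subseteq\VV_\fine(\Omega)\subset\VV(\Omega)$; the boundary-condition component is preserved since the edge interpolant and the spaces respect the tangential trace. For~\eqref{ass:localApproximation} I would take $\DD(\Omega):=(H^2(\Omega)\cap H^1_0(\Omega))^3\times H^2(\Omega)^3$, which is dense in $\VV(\Omega)$ because $C^\infty_c(\overline\Omega)^3$ is dense in $\boldsymbol H_0(\ccurl;\Omega)$ and $C^\infty(\overline\Omega)^3$ is dense in $\boldsymbol H(\ccurl;\Omega)$ by~\cite{MonkMaxwell}; the $\DD(\Omega)$-norm is additive since it is $H^2$-based. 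The approximation operator $\AA_\coarse(\vv,\ttau):=(\mathcal N_\coarse\vv,\mathcal N_\coarse\ttau)$ built from the edge interpolation operator then satisfies the local estimate $\norm{\vv-\mathcal N_\coarse\vv}{\boldsymbol H(\ccurl;T)}\lesssim h_T\norm{\vv}{H^2(T)}$ from~\cite[Section~3.5--3.6]{hiptmair2002}, giving~\eqref{ass:localApproximation} with $s=1$; since $\mathcal N_\coarse$ preserves homogeneous tangential boundary data, $\AA_\coarse$ indeed maps into $\VV_\coarse(\Omega)$. I expect the only genuinely delicate point to be the verification of~\eqref{ass:pde} in the sign-indefinite case $c=-\omega^2$, where coercivity of the least-squares bilinear form is not elementary and must be extracted from the cited compactness argument; everything else is a direct transcription of the Poisson argument.
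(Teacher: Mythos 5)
Your route is the same as the paper's: it likewise verifies \eqref{ass:pde}--\eqref{ass:solvability} essentially by reference (stating that the argumentation is analogous to the Poisson case and citing \cite{CarstensenStorn2018} for $c=-\omega^2$), checks \eqref{ass:additivity}--\eqref{ass:abscont} and \eqref{ass:localOperator} exactly as you do, and establishes \eqref{ass:conformity}--\eqref{ass:localApproximation} with the same choice $\DD(\Omega)=(H^2(\Omega)\cap H_0^1(\Omega))^3\times H^2(\Omega)^3$, the same density results from \cite{MonkMaxwell}, and the same local edge-interpolation estimate from \cite{hiptmair2002} with $s=1$; your $\AA_\coarse(\vv,\ttau)=(\mathcal N_\coarse\vv,\mathcal N_\coarse\ttau)$ is indeed what is intended there.

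The one step that does not hold up is your inline sketch of the lower bound in \eqref{ass:pde} for the definite case $\operatorname{ess}\inf c>0$: as written it is circular. You bound $\norm{\ccurl\uu}{\Omega}$ by $\norm{\ccurl\uu-\ssigma}{\Omega}+\norm{\ssigma}{\Omega}$ and $\norm{\uu}{\Omega}$ in terms of $\norm{\ccurl\ssigma+c\uu}{\Omega}+\norm{\ccurl\ssigma}{\Omega}$, but you never obtain independent control of $\norm{\ssigma}{\Omega}$ or $\norm{\ccurl\ssigma}{\Omega}$, so ``combining with the graph norms'' does not close the estimate. The standard argument needs an integration by parts: since $\uu\in\boldsymbol{H}_0(\ccurl;\Omega)$, one has $\ip{\ccurl\ssigma}{\uu}_\Omega=\ip{\ssigma}{\ccurl\uu}_\Omega$ and hence
\begin{align*}
 \norm{\ssigma}{\Omega}^2+\ip{c\uu}{\uu}_\Omega
 = \ip{\ccurl\ssigma+c\uu}{\uu}_\Omega-\ip{\ssigma}{\ccurl\uu-\ssigma}_\Omega ,
\end{align*}
so that Young's inequality and $c\ge c_0>0$ give $\norm{\uu}{\Omega}+\norm{\ssigma}{\Omega}\lesssim\norm{\LL(\uu,\ssigma)}{\Omega}$; only afterwards do the identities $\ccurl\uu=(\ccurl\uu-\ssigma)+\ssigma$ and $\ccurl\ssigma=(\ccurl\ssigma+c\uu)-c\uu$ recover the remaining graph-norm terms. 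Since you also appeal to the standard well-posedness literature (as the paper does), this is a repairable lapse in the sketch rather than a wrong overall approach, but the coercivity argument as you stated it would fail.
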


\subsection{Stokes problem}\label{sec:stokes}
Given $\ff\in \boldsymbol{L}^2(\Omega)$, find the velocity $\uu \in \HH_0^1(\Omega)$ and the pressure $p\in
L^2(\Omega)$ satisfying
\begin{align*}
  -\boldsymbol{\Delta}\uu + \nabla p &= \ff, \\
  \div\uu &= 0, \\
  \int_\Omega p \,dx &= 0.
\end{align*}
There are many different reformulations that are suitable to define LSFEMs; see~\cite[Chapter~7]{BochevGunzburgerLSQ}.
Here, we consider a \emph{stress--velocity--pressure} formulation (see, e.g.~\cite{CaiLeeWang04}), where the stress is defined by the relation
\begin{align*}
  \MMM = \boldsymbol\epsilon(\uu) - p\identity.
\end{align*}

Let $\Pi_\Omega v = |\Omega|^{-1} \ip{v}1_\Omega$ denote the $L^2(\Omega)$-orthogonal projection on constants.
Consider the Hilbert space
\begin{align}\label{eq:stokes:spaces}
  \VV(\Omega) := \HH_0^1(\Omega) \times \HHH(\Div;\Omega) \times L^2(\Omega)
\end{align}
equipped with the norm
\begin{align*}
  \norm{(\vv,\boldsymbol{N},q)}{\VV(\Omega)}^2 = \norm{\vv}{\boldsymbol{H}^1(\Omega)}^2 + \norm{\boldsymbol{N}}\Omega^2 
  + \norm{\Div\boldsymbol{N}}{\Omega}^2 + \norm{q}{\Omega}^2 + \norm{\Pi_\Omega q}{\Omega}^2.
\end{align*}
Then, the first-order system can be recast in the abstract form~\eqref{eq:modelAbstract},
i.e.,
\begin{align}\label{eq:stokes:first-order}
  \LL
  \begin{pmatrix}
    \uu \\ \MMM \\ p
  \end{pmatrix} :=
  \begin{pmatrix}
    -\Div\MMM \\
    \MMM-\boldsymbol\epsilon(\uu) + p\identity \\ 
    \div\uu \\
    \Pi_\Omega p
  \end{pmatrix}
  = 
  \begin{pmatrix}
    \ff \\
    \0 \\
    0 \\
    0
  \end{pmatrix} =: \FF \in L^2(\Omega)^{d+d^2+2},
\end{align}
where we identify $d\times d$ matrices as before with $d^2\times 1 $ vectors. 

Following the proof of~\cite[Theorem~3.2]{CaiLeeWang04} we conclude that~\eqref{ass:pde}--\eqref{ass:solvability} are guaranteed (the only difference to the work~\cite{CaiLeeWang04} is that we include the pressure condition directly in the first-order formulation).
Clearly,~\eqref{ass:additivity}--\eqref{ass:abscont} follow as in Section~\ref{sec:poisson}. Moreover,
we have included the term $\norm{\Pi_\Omega q}\Omega$ in the definition of the norm $\norm\cdot{\VV(\Omega)}$ to ensure
that~\eqref{ass:localOperator} is satisfied, i.e.,
\begin{align*}
  \norm{\LL(\vv,\boldsymbol{N},q)}T^2 &= \norm{\Div\boldsymbol{N}}T^2 + \norm{\boldsymbol{N}-\boldsymbol\epsilon(\vv)+q}{T}^2 
  + \norm{\div\vv}T^2 + \norm{\Pi_\Omega q}T^2 \\
  &\lesssim \norm{\vv}{\boldsymbol{H}^1(T)}^2 + \norm{\boldsymbol{N}}T^2 + \norm{\Div\boldsymbol{N}}T^2 + \norm{q}T^2 + \norm{\Pi_\Omega q}T^2 = \norm{(\vv,\boldsymbol{N},q)}{\VV(T)}^2
\end{align*}
holds for all $(\vv,\boldsymbol{N},q)\in \VV(\Omega)$.

Consider the conforming subspace
\begin{align}\label{eq:stokes:spaces:discrete}
  \VV_\coarse(\Omega) = \cS_0^{k+1}(\TT_\coarse)^d \times \RT^k(\TT_\bullet)^d \times P^k(\TT_\bullet) \subset \VV(\Omega),
\end{align}
where $P^k(\TT_\bullet)$ denotes the space of elementwise polynomials of degree $\leq k\in\N_0$.
It follows that the assumptions~\eqref{ass:conformity}--\eqref{ass:nested} are satisfied.
It remains to prove the local approximation assumption~\eqref{ass:localApproximation}.
For the first two components $(\boldsymbol{v}, \boldsymbol{N}) \in \HH_0^1(\Omega) \times \HHH(\Div;\Omega)$ in the space $\VV(\Omega)$, we argue as in Section~\ref{sec:poisson}. For the last component $q\in L^2(\Omega)$, we note that $H^1(\Omega)$ is dense in $L^2(\Omega)$. Let $\mathcal{P}_\bullet\colon L^2(\Omega)\to P^k(\TT_\bullet)$ denote the $L^2(\Omega)$-orthogonal projection and observe that $\Pi_\Omega(q-\mathcal{P}_\bullet q) = 0$. 
Then,
\begin{align*}
  \norm{q-\mathcal{P}_\bullet q}{T} + \norm{\Pi_\Omega(q-\mathcal{P}_\bullet q)}T = \norm{q-\mathcal{P}_\bullet q}T \lesssim h_T \norm{\nabla q}{T} 
  \quad\text{for all } q\in H^1(\Omega) \text{ and all } T\in\TT_\bullet.
\end{align*}
Altogether, we choose $\DD(\Omega) = (H^2(\Omega)\cap H_0^1(\Omega)) \times H^2(\Omega)^{d\times d} \times H^1(\Omega)$ and $\mathcal{A}_\bullet\colon \DD(\Omega)\to \VV(\Omega)$ with
$\mathcal{A}_\bullet(\vv,\boldsymbol{N},q) = (\mathcal{I}_\bullet \vv,\mathcal{R}_\bullet\boldsymbol{N},\mathcal{P}_\bullet q)$, where $\mathcal{I}_\bullet$ and $\mathcal{R}_\bullet$ denote the operators from Section~\ref{sec:poisson} applied to each row of $\vv$ resp. $\boldsymbol{N}$. 
Therefore, we conclude that also~\eqref{ass:localApproximation} is satisfied.
Overall, we have the following result:

\begin{proposition}
For any marking strategy satisfying~\eqref{ass:marking}, the adaptive least-squares formulation of~\eqref{eq:stokes:first-order} based on $\VV_\coarse(\Omega)$ from~\eqref{eq:stokes:spaces:discrete} guarantees plain convergence in the sense of Theorem~\ref{thm:convergence} and Theorem~\ref{thm:convergence:iexact}.\qed
\end{proposition}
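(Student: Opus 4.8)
The plan is to verify that the Stokes first-order system~\eqref{eq:stokes:first-order} together with the discrete spaces~\eqref{eq:stokes:spaces:discrete} satisfies the hypotheses of Theorem~\ref{thm:convergence} and Theorem~\ref{thm:convergence:iexact}, exactly along the lines already sketched in the surrounding text; once all the abstract assumptions are checked, both convergence statements follow directly. Since the initial mesh $\TT_0$ is a conforming simplicial triangulation and NVB is used, the refinement assumptions~\eqref{ass:refinement:reduction}--\eqref{ass:refinement:marked} and the marking assumption~\eqref{ass:marking} hold by the general discussion at the start of Section~\ref{sec:examples}. It therefore remains to check~\eqref{ass:pde}--\eqref{ass:abscont}, \eqref{ass:localOperator}, and~\eqref{ass:conformity}--\eqref{ass:localApproximation}.

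First I would establish~\eqref{ass:pde}--\eqref{ass:solvability}, i.e. that the operator $\LL$ in~\eqref{eq:stokes:first-order} is a continuous isomorphism from $\VV(\Omega)$ onto its range. This is the analytic heart of the matter: it amounts to an \emph{a priori} estimate $\norm{(\vv,\boldsymbol{N},q)}{\VV(\Omega)} \lesssim \norm{\LL(\vv,\boldsymbol{N},q)}{\Omega}$, which in turn rests on a Stokes-type stability bound. The clean route is to follow the proof of~\cite[Theorem~3.2]{CaiLeeWang04} verbatim; the only bookkeeping difference is that we have absorbed the zero-mean pressure constraint $\int_\Omega p\,dx = 0$ into the operator via the extra component $\Pi_\Omega p$, so one argues that control of $\norm{\div\vv}{\Omega}$, $\norm{\MMM - \boldsymbol\epsilon(\vv) + p\identity}{\Omega}$, $\norm{\Div\MMM}{\Omega}$ together with $\norm{\Pi_\Omega p}{\Omega}$ suffices to recover the full norm (here Korn's inequality and the standard $\inf$--$\sup$ condition for the pair $\HH_0^1\times L_0^2$ enter). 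The upper bound $\norm{\LL(\cdot)}{\Omega}\lesssim\norm{\cdot}{\VV(\Omega)}$ is immediate since all coefficients are constants or bounded. Since $\FF = (\ff,\0,0,0)$ is obtained from an actual solution of the Stokes problem, $\FF\in\ran(\LL)$, which gives~\eqref{ass:solvability}.

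Next, \eqref{ass:additivity}--\eqref{ass:abscont} are clear because $\norm{\cdot}{\VV(\Omega)}$ is built entirely from $L^2$-norms of $\vv$, $\Grad\vv$, $\boldsymbol{N}$, $\Div\boldsymbol{N}$, $q$, and $\Pi_\Omega q$ over the relevant subdomain; additivity over disjoint subdomains and absolute continuity w.r.t.\ Lebesgue measure follow as in Section~\ref{sec:poisson}. The local boundedness~\eqref{ass:localOperator} is the displayed computation already in the text: $\norm{\LL(\vv,\boldsymbol{N},q)}{T}^2 \lesssim \norm{(\vv,\boldsymbol{N},q)}{\VV(T)}^2$ with constant independent of $T$ and $\TT_\coarse$; note this is precisely why the term $\norm{\Pi_\Omega q}{\Omega}$ was included in the norm. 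For the discrete spaces~\eqref{eq:stokes:spaces:discrete}, conformity and finite-dimensionality~\eqref{ass:conformity} as well as nestedness under refinement~\eqref{ass:nested} follow since $\cS_0^{k+1}$, $\RT^k$, and $P^k$ are all nested under NVB and conforming in $H_0^1$, $\Hdivset{\Omega}$, $L^2$ respectively.

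Finally, for the local approximation property~\eqref{ass:localApproximation} I would take $\DD(\Omega) := (H^2(\Omega)\cap H_0^1(\Omega))^d \times H^2(\Omega)^{d\times d}\times H^1(\Omega)$ and the operator $\AA_\coarse(\vv,\boldsymbol{N},q) := (\II_\coarse\vv,\RR_\coarse\boldsymbol{N},\PP_\coarse q)$, where $\II_\coarse$ and $\RR_\coarse$ act rowwise as the nodal interpolant and the Raviart--Thomas projector of Section~\ref{sec:poisson}, and $\PP_\coarse$ is the $L^2(\Omega)$-orthogonal projection onto $P^k(\TT_\coarse)$. Density of $\DD(\Omega)$ in $\VV(\Omega)$ follows from density of smooth functions in each factor (for the pressure factor, $H^1(\Omega)$ is dense in $L^2(\Omega)$). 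The first two components give $h_T^1$-order local error as in Section~\ref{sec:poisson}; for the pressure, the key observation is $\Pi_\Omega(q-\PP_\coarse q)=0$, so the extra norm term contributes nothing and $\norm{q-\PP_\coarse q}{T}\lesssim h_T\norm{\nabla q}{T}$ suffices. Hence~\eqref{ass:localApproximation} holds with $s=1$. With all assumptions verified, Theorem~\ref{thm:convergence} and Theorem~\ref{thm:convergence:iexact} apply and yield the claim. The main obstacle is Step~two: carefully transporting the well-posedness argument of~\cite{CaiLeeWang04} to the formulation in which the pressure mean-value constraint is operator-encoded, making sure the resulting estimate still controls the full $\VV(\Omega)$-norm.
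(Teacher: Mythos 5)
Your proposal is correct and follows essentially the same route as the paper: verify \eqref{ass:pde}--\eqref{ass:solvability} via~\cite[Theorem~3.2]{CaiLeeWang04} with the zero-mean pressure constraint encoded through $\Pi_\Omega p$, check \eqref{ass:additivity}--\eqref{ass:abscont} and~\eqref{ass:localOperator} from the $L^2$-structure of the norm (using the extra $\Pi_\Omega q$ term), and establish \eqref{ass:conformity}--\eqref{ass:localApproximation} with $\DD(\Omega)=(H^2\cap H_0^1)^d\times H^2(\Omega)^{d\times d}\times H^1(\Omega)$, the operators $\II_\coarse$, $\RR_\coarse$, $\mathcal{P}_\coarse$, and the observation $\Pi_\Omega(q-\mathcal{P}_\coarse q)=0$, before invoking Theorems~\ref{thm:convergence} and~\ref{thm:convergence:iexact}. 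This matches the paper's verification step by step.
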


\bibliographystyle{abbrv}
\bibliography{literature}

\def\J{\boldsymbol{J}}
\def\NN{\mathcal{N}}
\def\RR{\mathcal{R}}

\appendix
\section{Comments on optimal convergence rates}\label{section:optimal}

In this appendix, we consider the LSFEM discretization of the general second-order problem from Section~\ref{section:example:general}. 
While Proposition~\ref{proposition:example:general} already proves \emph{plain convergence}
\begin{align*}
 \norm{\uu^\star - \uu_\ell^\star}{\V(\Omega)} \to 0
 \quad \text{as } \ell \to \infty,
\end{align*}
the present section proves that optimal convergence rates (in the sense of, e.g.,~\cite{axioms}) would already follow from 
the D\"orfler marking criterion (see Section~\ref{sec:marking}) and \emph{linear convergence}
\begin{align}\label{eq:linear_convergence}
 \norm{\uu^\star - \uu_{\ell+n}^\star}{\V(\Omega)}
 \le C_{\rm lin} \, q_{\rm lin}^n \, \norm{\uu^\star - \uu_\ell^\star}{\V(\Omega)}
 \quad \text{for all } \ell, n \in \N_0
\end{align}
with generic constants $C_{\rm lin} > 0$ and $0 < q_{\rm lin} < 1$.

\subsection{Scott--Zhang-type operators}
\label{section:scott-zhang}

For fixed $p \ge 1$, let $\JJ_\coarse \colon H^1(\Omega) \to S^p(\TT_\coarse)$ be the Scott--Zhang projector from~\cite{SZ_90}.
We recall that $\JJ_\coarse$ preserves discrete boundary data so that $\JJ_\coarse \colon H^1_0(\Omega) \to S^p_0(\TT_\coarse)$ is well-defined. Besides this, we will only exploit the following two properties which hold for all $v \in H^1(\Omega)$, $v_\coarse \in S^p(\TT_\coarse)$, and $T \in \TT_\coarse$:
\begin{itemize}
\item[\rm(i)] {\bf local projection property:} if $v|_{\Omega_\coarse(T)} = v_\coarse|_{\Omega_\coarse(T)}$, then $(\JJ_\coarse v)|_T = v_\coarse|_T$;
\item[\rm(ii)] {\bf global $\boldsymbol{H^1}$-stability:} $\norm{(1-\JJ_\coarse) v}{H^1(\Omega)} \le C(1 + \diam(\Omega)) \, \norm{v}{H^1(\Omega)}$,
where $C > 0$ depends only on the polynomial degree $p \ge 1$ and the shape regularity of $\TT_\coarse$.
\end{itemize}

The recent work~\cite{egsv2019} has constructed a Scott--Zhang-type projector $\PP_\coarse \colon \Hdivset{\Omega} \to \RT^{p-1}(\TT_\coarse)$. While~\cite[Section~3.4]{egsv2019} also includes Neumann boundary conditions $\ssigma\cdot \boldsymbol{n} = 0$ on $\Gamma_N \subseteq \partial\Omega$, we only consider the full space $\Hdivset{\Omega}$. Moreover, we will only exploit the following two properties which hold for all $\ssigma\in \Hdivset{\Omega}$, $\ssigma_\coarse \in \RT^{p-1}(\TT_\coarse)$, and $T \in \TT_\coarse$:
\begin{itemize}
\item[\rm(i)] {\bf local projection property:} if $\ssigma|_{\Omega_\coarse(T)} = \ssigma_\coarse|_{\Omega_\coarse(T)}$, then $(\PP_\coarse \ssigma)|_T = \ssigma_\coarse|_T$;
\item[\rm(ii)] {\bf global $\boldsymbol{H({\rm div};\Omega)}$-stability:} $\norm{(1-\PP_\coarse) \ssigma}{\Hdivset{\Omega}} \le C(1 + \diam(\Omega)) \, \norm{\ssigma}{\Hdivset{\Omega}}$, where again $C > 0$ depends only on $p \ge 1$ and the shape regularity of $\TT_\coarse$.
\end{itemize}

\subsection{Discrete reliability}

In the following, we exploit the local projection property and the global stability of the Scott--Zhang-type operators from Section~\ref{section:scott-zhang} and prove that the built-in least-squares estimator satisfies discrete reliability. 

\begin{lemma}[Discrete reliability]
There exist constants $C_\mathrm{ref},C_\mathrm{drel}\geq 1$ such that
for all $\TT_\coarse\in\refine(\TT_0)$ and all $\TT_\fine\in\refine(\TT_\coarse)$ there exists
$\RR_{\coarse,\fine}\subset\TT_\coarse$ such that
\begin{align}\label{eq:discrete_reliability}
 \norm{\uu_\fine^\star-\uu_\coarse^\star}{\VV(\Omega)} \leq C_\mathrm{drel} \eta_\coarse(\RR_{\coarse,\fine})
 \quad \text{and} \quad
 \#\RR_{\coarse,\fine} \leq C_\mathrm{ref}(\#\TT_\fine-\#\TT_\coarse).
\end{align}
The constant $C_\mathrm{ref}$ depends only on shape regularity of $\TT_\coarse$, whereas $C_\mathrm{drel}$ depends additionally on the constants $\Cmon$, $\Ccnt$ from~\eqref{ass:pde}, the polynomial degree $p \in \N$, and $\diam(\Omega)$.
\end{lemma}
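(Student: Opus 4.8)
The plan is to mimic the now-standard proof of discrete reliability for first-order system least-squares, building the set $\RR_{\coarse,\fine}$ from the refined region and using the commuting Scott--Zhang-type operators as localized interpolants. First I would fix $\TT_\coarse \in \refine(\TT_0)$ and $\TT_\fine \in \refine(\TT_\coarse)$ and set $\RR_{\coarse,\fine} := \set{T \in \TT_\coarse}{T \notin \TT_\fine \text{ or } \Omega_\coarse(T) \not\subseteq \bigcup(\TT_\coarse \cap \TT_\fine)}$, i.e.\ the refined elements together with their one-ring neighbors. Since newest-vertex bisection refines in a locally bounded way, $\#\RR_{\coarse,\fine} \le C_\mathrm{ref}(\#\TT_\fine - \#\TT_\coarse)$ with $C_\mathrm{ref}$ depending only on shape regularity; this is the easy combinatorial half of~\eqref{eq:discrete_reliability}.

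For the estimate $\norm{\uu_\fine^\star - \uu_\coarse^\star}{\VV(\Omega)} \le C_\mathrm{drel}\,\eta_\coarse(\RR_{\coarse,\fine})$, I would proceed as follows. Write $\boldsymbol{e} := \uu_\fine^\star - \uu_\coarse^\star = (e_u, \boldsymbol{e}_\sigma) \in \VV_\fine(\Omega) \subseteq \VV(\Omega)$. By coercivity~\eqref{ass:pde} it suffices to bound $\norm{\LL\boldsymbol{e}}{\Omega}^2 = b(\boldsymbol{e},\boldsymbol{e})$. Using the discrete variational formulation~\eqref{eq:varform} on both levels together with nestedness, $b(\boldsymbol{e}, \vv_\coarse) = 0$ for all $\vv_\coarse \in \VV_\coarse(\Omega)$, so I can subtract the coarse Scott--Zhang-type interpolant $\boldsymbol{J}_\coarse \boldsymbol{e} := (\JJ_\coarse e_u, \PP_\coarse \boldsymbol{e}_\sigma) \in \VV_\coarse(\Omega)$ to get
\begin{align*}
 \norm{\LL\boldsymbol{e}}{\Omega}^2 = b(\boldsymbol{e}, \boldsymbol{e} - \boldsymbol{J}_\coarse\boldsymbol{e}) = \ip{\FF - \LL\uu_\coarse^\star}{\LL(\boldsymbol{e} - \boldsymbol{J}_\coarse\boldsymbol{e})}_\Omega,
\end{align*}
where the last step uses $b(\uu_\fine^\star, \cdot) = F(\cdot)$ on $\VV_\fine(\Omega)$. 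The crucial point is the \emph{local projection property}: if $T \in \TT_\coarse \cap \TT_\fine$ and $\Omega_\coarse(T)$ consists only of unrefined elements, then $\boldsymbol{e}|_{\Omega_\coarse(T)}$ coincides with a coarse discrete function (indeed $\uu_\fine^\star$ and $\uu_\coarse^\star$ are both coarse there in the relevant components — this needs care, see below), hence $(\boldsymbol{J}_\coarse\boldsymbol{e})|_T = \boldsymbol{e}|_T$ and the integrand vanishes on $T$. Thus only elements $T \in \RR_{\coarse,\fine}$ contribute, and by Cauchy--Schwarz plus local $\VV$-boundedness of $\boldsymbol{e} - \boldsymbol{J}_\coarse\boldsymbol{e}$ on patches (from global stability~(ii), suitably localized) one obtains $\norm{\LL\boldsymbol{e}}{\Omega}^2 \lesssim \eta_\coarse(\RR_{\coarse,\fine})\,\norm{\boldsymbol{e}}{\VV(\Omega)}$, and dividing by $\norm{\LL\boldsymbol{e}}{\Omega} \simeq \norm{\boldsymbol{e}}{\VV(\Omega)}$ finishes the argument; the constant dependence is as claimed.

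The main obstacle — and the place where I would be most careful — is making the local projection property genuinely applicable: $\boldsymbol{e} = \uu_\fine^\star - \uu_\coarse^\star$ is \emph{not} in general equal to a coarse discrete function on an unrefined patch $\Omega_\coarse(T)$, because the fine-level solution $\uu_\fine^\star$ need not be piecewise-coarse there even if the mesh is locally unrefined (it is globally coupled). The standard fix is to instead write $\boldsymbol{e} - \boldsymbol{J}_\coarse\boldsymbol{e} = (\boldsymbol{1} - \boldsymbol{J}_\coarse)\uu_\fine^\star - (\boldsymbol{1} - \boldsymbol{J}_\coarse)\uu_\coarse^\star$ and use that $\boldsymbol{J}_\coarse\uu_\coarse^\star = \uu_\coarse^\star$ globally (projection property on the whole coarse space), reducing matters to $(\boldsymbol{1} - \boldsymbol{J}_\coarse)\uu_\fine^\star$, which does vanish elementwise on unrefined elements whose patch is unrefined, since $\uu_\fine^\star$ restricted to such a patch lies in the coarse space there; alternatively one works with $\uu_\fine^\star - \vv_\fine$ for a suitable transfer. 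One also needs a localized version of the stability estimate~(ii) — namely $\norm{(\boldsymbol{1} - \boldsymbol{J}_\coarse)\vv}{\VV(T)} \lesssim \norm{\vv}{\VV(\Omega_\coarse(T))}$ — which follows from the construction in~\cite{SZ_90,egsv2019} by the usual scaling/locality arguments but should be invoked explicitly. Once these bookkeeping issues are handled, the remainder is routine Cauchy--Schwarz, finite overlap of patches, and the norm equivalence $\enorm\cdot \simeq \norm\cdot{\VV(\Omega)}$.
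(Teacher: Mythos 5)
Your argument is correct and essentially reproduces the paper's proof: you choose the same patch-enlarged set $\mathcal{R}_{\coarse,\fine}$ of (neighbors of) refined elements, the same combined Scott--Zhang/Raviart--Thomas quasi-interpolant, the same Galerkin-orthogonality identity testing with $\uu_\fine^\star-\mathcal{I}_\coarse\uu_\fine^\star$, and your ``standard fix'' (using that the projector reproduces $\uu_\coarse^\star$, so the local projection property is applied to $\uu_\fine^\star$ alone) is exactly the paper's Step~2. The only inessential deviation is your appeal to a localized stability bound $\norm{(1-\mathcal{I}_\coarse)\vv}{\VV(T)}\lesssim\norm{\vv}{\VV(\Omega_\coarse(T))}$ with finite patch overlap, which is not needed: since $\LL(\uu_\fine^\star-\mathcal{I}_\coarse\uu_\fine^\star)$ vanishes outside $\bigcup\mathcal{R}_{\coarse,\fine}$, a single Cauchy--Schwarz on that set followed by the global continuity of $\LL$ and the global stability~(ii) of the projectors already gives the claimed estimate, as in the paper's Step~3.
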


\begin{proof}
The proof is split into three steps.

{\bf Step~1.}
Define the set
\begin{align*}
 \RR_{\coarse,\fine}=\TT_\coarse\setminus\NN_{\coarse,\fine} 
 \quad \text{with} \quad 
 \NN_{\coarse,\fine} = \set{T\in\TT_\bullet}{\Omega_\coarse(T)\subset \TT_\coarse\cap\TT_\fine}.
\end{align*}
Given $T \in \RR_{\coarse,\fine}$, there exists $T' \in \TT_\coarse$ such that $T \cap T' \neq \emptyset$ and $T' \not\in \TT_\coarse \cap \TT_\fine$. This implies that $T' \in \TT_\coarse \backslash \TT_\fine$ and hence $T$ belongs to the patch around $\TT_\coarse \backslash \TT_\fine$, i.e., $T \in \set{T' \in \TT_\coarse}{T' \cap \overline{\bigcup(\TT_\coarse \backslash \TT_\fine)} \neq \emptyset}$. Overall, we thus conclude that 
\begin{align*}
 \#\RR_{\coarse,\fine}
 \le \# \set{T' \in \TT_\coarse}{T' \cap \overline{\bigcup(\TT_\coarse \backslash \TT_\fine)} \neq \emptyset}
 \stackrel{\eqref{ass:refinement:quasiuniform}}\lesssim \#(\TT_\coarse \backslash \TT_\fine)
 \le \#\TT_\fine-\#\TT_\coarse.
\end{align*}%

{\bf Step~2.}
We define the operator $\II_\coarse \colon \V(\Omega) \to \V_\coarse(\Omega)$ by $\II_\coarse \vv := (\JJ_\coarse v, \PP_\coarse \ssigma)$ for $\vv = (v,\ssigma) \in \V(\Omega)$. Since $\JJ_\coarse$ and $\PP_\coarse$ are stable projections, it follows that also $\II_\coarse$ is a stable projection, i.e., for all $\vv \in \V(\Omega)$ and $\vv_\coarse \in \VV_\coarse(\Omega)$, it holds that
\begin{align}\label{eq:optimal1}
 \norm{(1-\II_\coarse)\vv}{\V(\Omega)}
 = \norm{(1 - \II_\coarse)(\vv \!-\!\vv_\coarse)}{\V(\Omega)}
 \lesssim \norm{\vv-\vv_\coarse}{\V(\Omega)},
\end{align}
where the hidden constant depends on $\diam(\Omega)$ and the constants $C > 0$ from Section~\ref{section:scott-zhang}.
Moreover, let $\uu_\fine^\star = (u_\fine^\star,\ssigma_\fine^\star)$. From the local projection properties of the operators $\JJ_\coarse$ and $\PP_\coarse$ and the choice of $\NN_{\coarse,\fine}$, it follows that $\uu_\fine^\star|_{\NN_{\coarse,\fine}} = (\II_\coarse\uu_\fine^\star)|_{\NN_{\coarse,\fine}}$. This leads to $\supp(u_\fine^\star - \II_\coarse u_\fine^\star) \subseteq \bigcup(\TT_\coarse \backslash \NN_{\coarse,\fine}) = \bigcup \RR_{\coarse,\fine}$. Since $\LL$ is defined pointwise, this also implies that
\begin{align}\label{eq:optimal2}
  \supp \LL(u_\fine^\star - \II_\coarse u_\fine^\star) \subseteq \mbox{$\bigcup$} \RR_{\coarse,\fine}.
\end{align}

{\bf Step~3.}
For any $\vv_\coarse\in \VV_\coarse(\Omega)$, the Galerkin orthogonality~\eqref{eq:galerkinorthogonality} proves that
\begin{align*}
 \norm{\uu_\fine^\star-\uu_\coarse^\star}{\VV(\Omega)}^2 
 &\stackrel{{\eqref{ass:pde}}}\simeq b(\uu_\fine^\star-\uu_\coarse^\star,\uu_\fine^\star-\uu_\coarse^\star) 
 \stackrel{\eqref{eq:galerkinorthogonality}}= b(\uu-\uu_\coarse^\star,\uu_\fine^\star-\uu_\coarse^\star)
 \stackrel{\eqref{eq:galerkinorthogonality}}= b(\uu-\uu_\coarse^\star,\uu_\fine^\star-\vv_\coarse).
\end{align*}
With the choice $\vv_\coarse = \II_\coarse u_\fine^\star$, we see that
\begin{align*}
 &b(\uu-\uu_\coarse^\star,\uu_\fine^\star-\vv_\coarse) 
 =
 \ip{\LL(\uu-\uu_\coarse^\star)}{\LL(\uu_\fine^\star-\vv_\coarse)}_\Omega 
 \stackrel{\eqref{eq:optimal2}}=
 \ip{\LL(\uu-\uu_\coarse^\star)}{\LL(\uu_\fine^\star-\vv_\coarse)}_{\bigcup\RR_{\coarse,\fine}}
 \\& \quad
 \stackrel{\phantom{\eqref{ass:pde}}}\leq
 \norm{\LL(\uu-\uu_\coarse^\star)}{\bigcup\RR_{\coarse,\fine}} \, 
 \norm{\LL(\uu_\fine^\star-\vv_\coarse)}{\bigcup\RR_{\coarse,\fine}} 
 \stackrel{\eqref{eq:optimal2}}= 
 \eta_\coarse(\RR_{\coarse,\fine}) \, \norm{\LL(\uu_\fine^\star-\vv_\coarse)}{\Omega}
 \\& \quad
 \stackrel{\eqref{ass:pde}}\lesssim \eta_\coarse(\RR_{\coarse,\fine})  \norm{\uu_\fine^\star-\vv_\coarse}{\VV(\Omega)}
 \stackrel{\eqref{eq:optimal1}}\lesssim \eta_\coarse(\RR_{\coarse,\fine})  \norm{\uu_\fine^\star-\uu_\coarse^\star}{\VV(\Omega)}.
\end{align*}
Combining the latter two estimates, we conclude the proof.
\end{proof}
\color{black}

\def\opt{\diamond}
\subsection{Linear convergence implies optimal algebraic convergence rates}

Given $s > 0$, we consider the following approximation class 
\begin{align}
 \norm{\uu}{\mathbb{A}_s} := \sup_{N \in \N_0} \min_{\substack{\TT_\opt \in \mathbb{T} \\ \#\TT_\opt - \#\TT_0 \le N}} \min_{\vv_\opt \in \V_\opt(\Omega)} (N+1)^s \norm{\uu^\star - \vv_\opt}{\V(\Omega)} \in \R_{\ge0} \cup \{\infty\}.
\end{align}
We note that $\norm{\uu}{\mathbb{A}_s} < \infty$ implies the existence of a sequence of meshes $(\bar\TT_\ell)_{\ell \in \N_0}$ with $\bar\TT_0 = \TT_0$ such that the corresponding least-square solutions satisfy
\begin{align*}
 \min_{\bar\vv_\ell \in \bar\V_\ell(\Omega)}\norm{\uu^\star - \bar\vv_\ell}{\V(\Omega)} 
 \lesssim (\#\bar\TT_\ell - \#\TT_0)^{-s} \to 0
 \quad \text{as } \ell \to \infty.
\end{align*}
One says that the Algorithm~\ref{algorithm} is rate-optimal, if and only if for all $s > 0$ with $\norm{\uu}{\mathbb{A}_s} < \infty$, the sequence of meshes $(\TT_\ell)_{\ell \in \N_0}$ generated by Algorithm~\ref{algorithm} guarantees that 
\begin{align*}
 \norm{\uu^\star - \uu_\ell^\star}{\V(\Omega)}
 \lesssim (\#\TT_\ell - \#\TT_0)^{-s} \to 0
 \quad \text{as } \ell \to \infty.
\end{align*}
We refer to~\cite{axioms} for an abstract framework of the state of the art of rate-optimal adaptive algorithms and note that LSFEM guarantees the equivalence
\begin{align}\label{eq:quasi-monotone}
 \eta_\coarse(\uu_\coarse^\star)
 \stackrel{\eqref{eq:reliable-efficient}}\simeq 
 \norm{\uu^\star - \uu_\coarse^\star}{\V(\Omega)}
 \stackrel{\eqref{eq:cea}}\simeq \min_{\vv_\coarse \in \V_\coarse(\Omega)} \norm{\uu^\star - \vv_\coarse}{\V(\Omega)}
 \stackrel{\eqref{eq:reliable-efficient}}\simeq 
 \min_{\vv_\coarse \in \V_\coarse(\Omega)} \eta_\coarse(\vv_\coarse)
\end{align}
so that the present definition of $\norm{\uu}{\mathbb{A}_s}$ is equivalent to that of~\cite{bdd2004,stevenson2007,ckns2008,axioms}. 

The following result is a direct consequence of discrete reliability~\eqref{eq:discrete_reliability} and the analysis from~\cite{axioms} under the following usual assumptions: First, suppose that we employ newest vertex bisection~\cite{stevenson:NVB,KPP13} for mesh-refinement and that the initial mesh $\TT_0$ satisfies the admissibility condition from~\cite{stevenson:NVB} for $d \ge 3$. Second, suppose that Algorithm~\ref{algorithm} employs the D\"orfler marking criterion with minimal cardinality, i.e., for given $0 < \theta \le 1$, it holds that
\begin{align}
 \MM_\ell \in \mathbb{M}_\ell := \set{\UU_\ell \subseteq \TT_\ell}{\theta \, \eta_\ell(u_\ell)^2 \le \eta_\ell(\UU_\ell, u_\ell)^2}
 \text{ \ \ and \ } \#\MM_\ell \le \#\UU_\ell 
 \text{ \ for all }\ \UU_\ell \subseteq \TT_\ell.
\end{align}

\begin{proposition}\label{prop:optimal}
There exists a constant $0 < \theta_{\rm opt} \le 1$ such that for all $0 < \theta < \theta_{\rm opt}$, the following implication holds: If there holds linear convergence~\eqref{eq:linear_convergence}, then Algorithm~\ref{algorithm} is even rate optimal, i.e., for all $s > 0$, there exists a constant $C_{\rm opt} > 0$ such that
\begin{align}\label{eq:optimal}
 C_{\rm opt}^{-1} \, \norm{\uu}{\mathbb{A}_s}
 \le \sup_{\ell \in \N_0} (\#\TT_\ell - \#\TT_0 + 1)^s \norm{\uu^\star - \uu_\ell^\star}{\V(\Omega)}
 \le C_{\rm opt} \, \norm{\uu}{\mathbb{A}_s}.
\end{align}
\end{proposition}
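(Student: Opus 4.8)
The plan is to verify that the built-in least-squares estimator fits the abstract framework of~\cite{axioms} and then to quote its main theorem on rate-optimality. That framework requires stability of the estimator on non-refined elements, discrete reliability, quasi-monotonicity of the estimator under refinement, the mesh-closure and overlay estimates for newest vertex bisection, D\"orfler marking with minimal cardinality, and---in place of the axioms of~\cite{axioms} that are normally used to obtain linear convergence---the hypothesis~\eqref{eq:linear_convergence}. Of these ingredients, discrete reliability~\eqref{eq:discrete_reliability} has just been established, quasi-monotonicity (in fact monotonicity, by nestedness of the discrete spaces) is contained in~\eqref{eq:quasi-monotone}, the combinatorial properties of newest vertex bisection are quoted from~\cite{stevenson:NVB,KPP13}, and D\"orfler marking with minimal cardinality is assumed. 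So what really remains is to record stability and to run the (standard) counting argument.

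First I would check stability. Since $\eta_\coarse(T,v) = \norm{\FF - \LL v}{T}$ depends only on the element $T$ and on $v$, but not on the surrounding mesh, the reverse triangle inequality and~\eqref{ass:pde} yield
\begin{align*}
 \big| \eta_\coarse(\UU,v_\coarse) - \eta_\fine(\UU,v_\fine) \big|
 \le \norm{\LL(v_\coarse - v_\fine)}{\Omega}
 \le \Ccnt \, \norm{v_\coarse - v_\fine}{\VV(\Omega)}
\end{align*}
for all $\TT_\coarse, \TT_\fine \in \TTT$, all $\UU \subseteq \TT_\coarse \cap \TT_\fine$, and all $v_\coarse \in \VV_\coarse(\Omega)$, $v_\fine \in \VV_\fine(\Omega)$; hence the estimator is stable with constant $\Ccnt$. (In particular, and in contrast to standard Galerkin FEM, no data-oscillation term ever appears.)

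Next I would carry out the optimality-of-D\"orfler-marking step of~\cite{axioms}. Let $\theta_{\rm opt} \in (0,1]$ be the threshold determined by $\Ccnt$ and $C_\mathrm{drel}$ (e.g.\ $\theta_{\rm opt} = (1 + \Ccnt^2 C_\mathrm{drel}^2)^{-1}$) and fix $0 < \theta < \theta_{\rm opt}$. Given $\ell \in \N_0$ with $\norm{\uu}{\mathbb{A}_s} < \infty$, I would overlay $\TT_\ell$ with a mesh $\TT_\opt \in \TTT$ of near-minimal cardinality whose best-approximation error lies below a suitable fixed multiple of $\eta_\ell(\uu_\ell^\exact)$: the overlay estimate then gives $\#(\TT_\ell \oplus \TT_\opt) - \#\TT_\ell \le \#\TT_\opt - \#\TT_0 \lesssim (\eta_\ell(\uu_\ell^\exact)/\norm{\uu}{\mathbb{A}_s})^{-1/s}$, while C\'ea's lemma~\eqref{eq:cea} together with efficiency~\eqref{eq:reliable-efficient} makes $\eta_{\TT_\ell \oplus \TT_\opt}(\uu_{\TT_\ell \oplus \TT_\opt}^\exact)$ as small as desired. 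Feeding the pair $(\TT_\ell,\, \TT_\ell \oplus \TT_\opt)$ into discrete reliability~\eqref{eq:discrete_reliability} and combining with stability and efficiency~\eqref{eq:reliable-efficient} shows that the associated set $\RR_\ell \subseteq \TT_\ell$ satisfies the D\"orfler property $\theta \, \eta_\ell(\uu_\ell^\exact)^2 \le \eta_\ell(\RR_\ell, \uu_\ell^\exact)^2$; minimal cardinality of $\MM_\ell$ and $\#\RR_\ell \le C_\mathrm{ref}(\#(\TT_\ell \oplus \TT_\opt) - \#\TT_\ell)$ then give $\#\MM_\ell \lesssim \eta_\ell(\uu_\ell^\exact)^{-1/s}$, with hidden constant depending on $\norm{\uu}{\mathbb{A}_s}$, $s$, $\theta$, $\Cmon$, $\Ccnt$, and shape regularity.

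Finally, the mesh-closure estimate for newest vertex bisection gives $\#\TT_\ell - \#\TT_0 \lesssim \sum_{j=0}^{\ell-1} \#\MM_j \lesssim \sum_{j=0}^{\ell-1} \eta_j(\uu_j^\exact)^{-1/s}$; linear convergence~\eqref{eq:linear_convergence} combined with~\eqref{eq:reliable-efficient} gives $\eta_\ell(\uu_\ell^\exact) \lesssim q_{\rm lin}^{\ell-j} \, \eta_j(\uu_j^\exact)$ for $j \le \ell$, so summing the resulting geometric series yields $\#\TT_\ell - \#\TT_0 + 1 \lesssim \eta_\ell(\uu_\ell^\exact)^{-1/s}$, hence $\eta_\ell(\uu_\ell^\exact) \lesssim (\#\TT_\ell - \#\TT_0 + 1)^{-s}$, and reliability~\eqref{eq:reliable-efficient} upgrades this to the upper bound in~\eqref{eq:optimal}. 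The lower bound in~\eqref{eq:optimal} is the easy direction: for $N \in \N_0$ one takes the largest $\ell$ with $\#\TT_\ell - \#\TT_0 \le N$, uses that a single bisection step enlarges the cardinality only by a factor depending on $d$ and $\TT_0$ (so that $N + 1 \lesssim \#\TT_\ell - \#\TT_0 + 1$), and bounds the best approximation over any $\TT_\opt$ with $\#\TT_\opt - \#\TT_0 \le N$ by $\min_{\vv_\ell \in \VV_\ell(\Omega)} \norm{\uu^\exact - \vv_\ell}{\VV(\Omega)} \simeq \norm{\uu^\exact - \uu_\ell^\exact}{\VV(\Omega)}$ via~\eqref{eq:cea}. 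The hard part is the optimality-of-D\"orfler-marking step: the threshold $\theta_{\rm opt}$ hinges on the precise interplay of $\Ccnt$ and $C_\mathrm{drel}$, and the overlay estimate has to be combined just right with the approximation-class bound to control $\#\MM_\ell$ --- everything else is either routine for LSFEM (the missing data-oscillation term is a simplification, not an obstacle) or quoted verbatim from~\cite{axioms}.
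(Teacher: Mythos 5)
Your proposal is correct and follows essentially the same route as the paper's sketch: stability of the built-in estimator via the triangle inequality (Axiom (A1) of~\cite{axioms}), discrete reliability~\eqref{eq:discrete_reliability} yielding optimality of D\"orfler marking, quasi-monotonicity~\eqref{eq:quasi-monotone} giving the comparison/overlay step, and the mesh-closure estimate combined with linear convergence~\eqref{eq:linear_convergence} to sum the geometric series. The only difference is that you unpack the counting argument that the paper simply cites from~\cite[Lemma~4.14, Proposition~4.15, Section~4.5]{axioms}, which is a faithful expansion rather than a different argument.
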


\begin{proof}[Sketch of proof]
According to the triangle inequality, the built-in least-squares error estimator is stable on non-refined elements in the sense of~\cite[Axiom~(A1)]{axioms}. Together with discrete reliability~\eqref{eq:discrete_reliability}, this implies optimality of the D\"orfler marking; see~\cite[Section~4.5]{axioms}. Due to~\eqref{eq:quasi-monotone}, the error estimator is quasi-monotone~\cite[Eq.~(3.8)]{axioms} with respect to mesh-refinement. This implies the so-called comparison lemma; see~\cite[Lemma~4.14]{axioms}. Together with linear convergence~\eqref{eq:linear_convergence}, \cite[Proposition~4.15]{axioms} concludes optimality~\eqref{eq:optimal}.
\end{proof}

\begin{remark}
We note that the (constrained) optimality result of Proposition~\ref{prop:optimal} can be obtained for any of the examples presented in Section~\ref{sec:examples}. For the needed $\boldsymbol{H}(\ccurl;\Omega)$-stable local projection, we refer, e.g., to~\cite[Section~4]{MR2869030}.
\end{remark}

\end{document}